\newtheorem{theorem}{Theorem}[section]
\newtheorem{proposition}[theorem]{Proposition}
\newtheorem{lemma}[theorem]{Lemma}
\newtheorem{corollary}[theorem]{Corollary}
\theoremstyle{definition}
\newtheorem*{example}{Example}
\newtheorem*{remark}{Remark}
\title{A Counterexample to a Question on Grothendieck Groups of Schemes}
\author{Amal Mattoo}
\begin{document} 
	\begin{abstract}
		If an element of the Grothendieck group of the derived category of a scheme is locally represented by perfect complexes, then can the original element be represented by a perfect complex? We provide a counterexample on a projective variety of dimension $2$, as well as a counterexample on a thickening of a Dedekind domain. 
	\end{abstract}
	\maketitle
	\section{Introduction}
		This paper answers the following question posed by Andrei Okounkov. 
		
		Let $X$ be a scheme, $\bigcup_{i}U_{i}$ an open covering, and $[\alpha]\in K_{0}'(X)$ an element of the Grothendieck group of the bounded derived category of coherent sheaves on $X$. Suppose that $[\alpha|_{U_{i}}]\in K_{0}(U_{i})$ for all $i$; i.e., $[\alpha]$ is ``locally perfect'': the restrictions belong to the Grothendieck group of the derived category of perfect complexes on $X$. Does it follow that $[\alpha]\in K_{0}(X)$, i.e., $[\alpha]$ is ``perfect''?  
		
		We provide two types of counterexamples. First:
		\begin{proposition}
			Let $X=(\mathbf{P}^{1}\times\mathbf{P}^{1})\sqcup_{\mathbf{P}^{1}\times\{0\}}(\mathbf{P}^{1}\times\mathbf{P}^{1})$. Then there is an element in $K_{0}'(X)$ that is locally perfect but not perfect.
		\end{proposition}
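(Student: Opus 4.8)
The plan is to localize all the difficulty onto the singular locus $D=\mathbf{P}^{1}\times\{0\}$, along which $X$ is the transverse union of the two smooth surfaces $Y_{1},Y_{2}\cong\mathbf{P}^{1}\times\mathbf{P}^{1}$. Locally along $D$ the scheme is $\mathbf{A}^{1}\times\operatorname{Spec} k[u,v]/(uv)$, a line times a node, so $X$ is regular away from $D$ and everything interesting happens on $D$. Write $i_{1},i_{2}\colon Y_{1},Y_{2}\hookrightarrow X$ and $i\colon D\hookrightarrow X$ for the closed immersions and $p\colon Y_{1}\sqcup Y_{2}\to X$ for the normalization. I will exhibit the class
\[
[\alpha]=[i_{1*}\mathcal{O}_{Y_{1}}]+[i_{2*}\mathcal{O}_{Y_{2}}(1,0)]\in K_{0}'(X),
\]
the pushforward of two rank-one sheaves whose restrictions to $D$ are $\mathcal{O}_{D}$ and $\mathcal{O}_{D}(1)$ (the twist having degree one on the factor containing $D$).

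To prove local perfectness, cover $X$ by the regular open $X\setminus D$ together with two node-neighborhoods $U_{0},U_{\infty}$ lying over the standard affines of $D\cong\mathbf{P}^{1}$; on each $U_{\bullet}\cong\operatorname{Spec}A$ with $A=k[s][u,v]/(uv)$ the bundle $\mathcal{O}(1,0)$ trivializes, so there $[\alpha]$ becomes $[A/(v)]+[A/(u)]$. The sequence $0\to(u,v)\to A\to A/(u,v)\to 0$ together with $(u,v)\cong A/(v)\oplus A/(u)$, and the sequence $0\to A/(v)\xrightarrow{u}A/(v)\to A/(u,v)\to 0$, give $[A/(v)]+[A/(u)]=[A]$ in $K_{0}'(A)$. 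Thus each restriction is the class of a free module, hence perfect, so $[\alpha]$ is locally perfect.

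The heart of the matter is an invariant that kills perfect classes but not $[\alpha]$. Comparing the localization sequences for $D\subset X$ and $D\sqcup D\subset Y_{1}\sqcup Y_{2}$, which share the common open complement $X\setminus D$, I would identify $K_{0}'(X)\cong\operatorname{coker}\bigl(\delta\colon K_{0}'(D)\to K_{0}'(Y_{1})\oplus K_{0}'(Y_{2})\bigr)$ with $\delta(c)=(\iota_{1*}c,-\iota_{2*}c)$, and define $\bar\Phi\colon K_{0}'(X)\to K_{0}'(D)$ on representatives by $(a_{1},a_{2})\mapsto Li_{D}^{*}a_{1}-Li_{D}^{*}a_{2}$. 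Because $D$ has trivial normal bundle in each $Y_{j}$, the self-intersection formula yields $Li_{D}^{*}\iota_{j*}c=c-c=0$, so $\bar\Phi$ is well defined. For a perfect complex $Q$, the conductor sequence $0\to\mathcal{O}_{X}\to p_{*}\mathcal{O}_{Y_{1}\sqcup Y_{2}}\to\mathcal{O}_{D}\to 0$ and the projection formula give $[Q]=[(a_{1}-\iota_{1*}b,\,a_{2})]$ with $a_{j}=[Li_{j}^{*}Q]$ and $b=[Li_{D}^{*}Q]=a_{1}|_{D}=a_{2}|_{D}$, whence $\bar\Phi[Q]=b-0-b=0$; so $\bar\Phi$ annihilates the image of $K_{0}(X)$. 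Finally $\bar\Phi[\alpha]=[\mathcal{O}_{D}]-[\mathcal{O}_{D}(1)]=-[\mathcal{O}_{\mathrm{pt}}]\neq 0$, proving $[\alpha]$ is not perfect.

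The main obstacle I anticipate is the rigorous construction of $\bar\Phi$: derived restriction $Li_{j}^{*}$ is defined only on perfect classes, not on all of $K_{0}'(X)$, so both its well-definedness and its vanishing on perfect complexes must be routed through the Mayer--Vietoris presentation of $K_{0}'(X)$ as a cokernel, the conductor sequence, and the triviality of $N_{D/Y_{j}}$, rather than by naively restricting $[\alpha]$. Assembling these—especially the cokernel identification and the self-intersection computation—is where the real work lies; the local perfectness and the final nonvanishing are then immediate.
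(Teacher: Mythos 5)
Your argument is correct in outline but takes a genuinely different route from the paper, both in the choice of element and in the obstruction used. The paper takes $\mathcal{M}=\mathcal{I}_{D}$ for $D=\{p\}\times\mathbf{P}^{1}$ a ruling lying on one component, and detects non-perfectness with the determinant alone: Lemma \ref{image} reduces the problem to showing $\det[\mathcal{M}|_{U}]\notin\textnormal{im}(\textnormal{Pic}(X)\to\textnormal{Pic}(U))$, and Lemma \ref{diagonal} computes that this image is the diagonal of $\textnormal{Pic}(\mathbf{P}^{1})\oplus\textnormal{Pic}(\mathbf{P}^{1})$ while $\det[\mathcal{M}|_{U}]=([p],0)$ is off-diagonal; local perfectness comes from trivializing $\mathcal{O}(p)$ on an open cover so that the divisor becomes principal (Lemma \ref{perfect}). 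Your class $[i_{1*}\mathcal{O}_{Y_{1}}]+[i_{2*}\mathcal{O}_{Y_{2}}(1,0)]$ works equally well --- indeed its determinant on the smooth locus is $(\mathcal{O},\mathcal{O}(1))$, also off-diagonal, so the paper's Lemma \ref{image} would dispose of it in two lines --- and your explicit computation $[A/(u)]+[A/(v)]=[A]$ on the node chart is correct and is a concrete version of Lemma \ref{perfect}. What your route buys is a finer, more structural invariant $\bar\Phi$ measuring the mismatch of the two derived restrictions to $D$; what it costs is the presentation $K_{0}'(X)\cong\operatorname{coker}\bigl(K_{0}'(D)\xrightarrow{\delta}K_{0}'(Y_{1})\oplus K_{0}'(Y_{2})\bigr)$, which you rightly flag as the crux but do not prove. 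Note that surjectivity of the pushforward $K_{0}'(Y_{1})\oplus K_{0}'(Y_{2})\to K_{0}'(X)$ is elementary (every integral closed subscheme lies in one component), but exactness in the middle is genuinely needed for $\bar\Phi$ to descend --- annihilating $\operatorname{im}(\delta)$ proves nothing if the kernel of the pushforward is strictly larger --- and the standard proof compares the boundary maps in Quillen's localization sequences for $D\subset Y_{j}$ and $D\subset X$ via higher $G$-theory and d\'evissage. That statement is true, so your proof can be completed, but this single input is substantially heavier machinery than anything in the paper, whose argument needs only the compatibility of $\det$ with restriction and an explicit Picard-group computation.
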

		See Proposition \ref{glued} for a more detailed statement. With similar techniques we can construct an irreducible counterexample:		
		\begin{proposition}
			Let $\Gamma$ be a nodal curve with normalization $E\to\Gamma$ where $E$ an elliptic curve, and let $X=E\times\Gamma$. Then there is an element in $K_{0}'(X)$ that is locally perfect but not perfect.
		\end{proposition}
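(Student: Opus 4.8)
The plan is to realize $X$ as the pushout of its normalization and to produce the desired class as the pushforward of a line bundle that fails to descend. Let $\nu\colon E\to\Gamma$ glue the points $p\neq q$ to the node $n$, set $\widetilde X=E\times E$ and $\pi=\mathrm{id}_E\times\nu\colon \widetilde X\to X$, so that $\pi$ is finite and restricts to an isomorphism over the smooth locus $X^{\mathrm{sm}}=X\setminus Z$, where $Z=E\times\{n\}\cong E$ is the singular locus and $\widetilde Z=\pi^{-1}(Z)=D_p\sqcup D_q$ with $D_p=E\times\{p\}$, $D_q=E\times\{q\}$. Writing $\iota_p,\iota_q\colon E\hookrightarrow\widetilde X$ for the two sheets, I record the key fact that $\pi\circ\iota_p=\pi\circ\iota_q=\rho$, the common map $a\mapsto(a,n)$. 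I would then take $\mathcal L=\mathcal O_{\widetilde X}(\Delta)$ for $\Delta$ the diagonal, so that $\iota_p^*\mathcal L\cong\mathcal O_E(p)$ and $\iota_q^*\mathcal L\cong\mathcal O_E(q)$ are non-isomorphic line bundles on $E$ (since $p\neq q$), and set $\alpha=[\pi_*\mathcal L]-[\pi_*\mathcal O_{\widetilde X}]\in K_0'(X)$.

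Local perfectness should be immediate from the construction. Over $X^{\mathrm{sm}}$ the map $\pi$ is an isomorphism, so $\alpha$ restricts to $[\mathcal L]-[\mathcal O]$, a difference of line bundles on a smooth surface, hence perfect. Near a point of $Z$ the two sheets $D_p$ and $D_q$ are disjoint in $\widetilde X$, so I can trivialize $\mathcal L$ on a $\pi$-saturated open $U$ whose preimage is a disjoint union of two opens on which $\mathcal L\cong\mathcal O$; then $\pi_*\mathcal L|_U\cong\pi_*\mathcal O_{\widetilde X}|_U$ and $\alpha|_U=0$. Thus every point of $X$ has a neighborhood on which $\alpha$ is (even trivially) perfect.

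The substance is showing that $\alpha$ is not globally perfect, which I would detect with the homomorphism $\Phi\colon K_0(\widetilde X)\to K_0(E)$ given by $\Phi(\beta)=[L\iota_p^*\beta]-[L\iota_q^*\beta]$, well defined because $\widetilde X$ is smooth. On one hand $\Phi([\mathcal L]-[\mathcal O_{\widetilde X}])=[\mathcal O_E(p)]-[\mathcal O_E(q)]$, which is nonzero in $K_0(E)\cong\mathbf Z\oplus\mathrm{Pic}(E)$ because its determinant is $\mathcal O_E(p-q)\neq\mathcal O_E$. On the other hand, suppose $\alpha=[P]$ for a perfect complex $P$ on $X$; then $L\pi^*P$ is perfect and $\Phi([L\pi^*P])=[L\rho^*P]-[L\rho^*P]=0$ since $\pi\iota_p=\pi\iota_q=\rho$. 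Finally, $\alpha$ and $[\mathcal L]-[\mathcal O]$ agree over $X^{\mathrm{sm}}$, so $[L\pi^*P]$ and $[\mathcal L]-[\mathcal O_{\widetilde X}]$ agree on $\widetilde X\setminus\widetilde Z$ and hence differ by a class supported on $\widetilde Z$, by the localization sequence $G_0(\widetilde Z)\to K_0(\widetilde X)\to K_0(\widetilde X\setminus\widetilde Z)\to 0$.

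The last ingredient, and the step I expect to be the main obstacle to pin down carefully, is that $\Phi$ annihilates every class supported on $\widetilde Z$. Such a class is $\iota_{p*}\delta_p+\iota_{q*}\delta_q$ by dévissage; since $D_p\cap D_q=\varnothing$ the opposite sheet contributes nothing, while the self-intersection formula gives $[L\iota_p^*\iota_{p*}\delta_p]=\delta_p\cdot(1-[N^\vee])$, which vanishes because the normal bundle $N=N_{D_p/\widetilde X}$ is trivial (each $D_p$ is a fiber of the second projection). Hence $\Phi$ kills $\widetilde Z$-supported classes, so $\Phi([\mathcal L]-[\mathcal O])=\Phi([L\pi^*P])=0$, contradicting the computation above; therefore $\alpha$ is not perfect. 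I anticipate that the delicate points will be the precise bookkeeping of the saturated neighborhood in the local-perfectness step and the verification that $\Phi$ is well defined and vanishes on supported classes via the triviality of the normal bundles of the two sheets.
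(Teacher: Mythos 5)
Your argument for non-perfectness is correct and genuinely different from the paper's. The paper detects the obstruction with the determinant of the restriction to the smooth locus (Lemma \ref{image}) together with an explicit computation of $\textnormal{im}(\textnormal{Pic}(X)\to\textnormal{Pic}(U))$ through the normalization; you instead use the functional $\Phi=[L\iota_p^*(-)]-[L\iota_q^*(-)]$ on $K_0(\tilde X)$, kill the ambiguity supported on $\tilde Z$ via the localization sequence and the self-intersection formula with trivial normal bundle, and derive a contradiction from $\pi\iota_p=\pi\iota_q$. That chain of steps checks out, and note that your class is essentially the paper's: since $N_{\Delta/\tilde X}$ is trivial, $\alpha=[\pi_*\mathcal O(\Delta)]-[\pi_*\mathcal O_{\tilde X}]=[\mathcal O_D]=[\mathcal O_X]-[\mathcal I_D]$.

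The local-perfectness half, however, has a genuine gap, and it sits exactly where you suspected. First, the mechanism you propose cannot occur: for any nonempty open $U\subset X$, the preimage $\pi^{-1}(U)$ is a nonempty open subset of the irreducible surface $E\times E$, hence irreducible and connected, so it is never a disjoint union of two opens separating the sheets. The ``two branches near the node'' picture is \'etale-local, not Zariski-local. Second, and more seriously, the conclusion you actually need --- that $\mathcal O_{\tilde X}(\Delta)$ trivializes on $\pi^{-1}(U)$ for $U$ in some cover --- fails at the two points of $D\cap Z=\{(p,n),(q,n)\}$. Away from these points the claim is easy (either $\pi$ is an isomorphism, or $U$ can be chosen disjoint from $D$ so that $\alpha|_U=[\mathcal O_{D\cap U}]=0$), but any $U$ containing $(p,n)$ has $\pi^{-1}(U)\ni(p,p)\in\Delta$, and $\textnormal{Pic}(\pi^{-1}(U))$ is the quotient of $\textnormal{Pic}(E\times E)$ by the classes of the removed (saturated) divisors; since $[\Delta]$ is not in the span of fiber classes, it does not die unless the cover is engineered --- e.g.\ by removing a translated graph $\Gamma_t=\{(e,e\oplus t)\}$ together with fibers $\{a_1\}\times E,\{a_2\}\times E$ chosen so that $a_1\ominus a_2$ accounts for the $\textnormal{Pic}^0$ discrepancy between $[\Delta]$ and $[\Gamma_t]$. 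Equivalently (this is the paper's route), one must first show that $\mathcal O(\pm\Delta)$ restricted to $\pi^{-1}(V)$ \emph{descends} to a line bundle $\mathcal N$ on $V$, which requires the two sheet restrictions $\mathcal O(p)|_{V\cap Z}$ and $\mathcal O(q)|_{V\cap Z}$ to become isomorphic --- again a condition on which points of $E$ the cover removes --- and then compare $\pi_*\pi^*\mathcal N$ with $\pi_*\pi^*\mathcal O$ affine-locally as in Lemma \ref{locally-trivial}. This descent-plus-affine-local comparison at the two points of $D\cap Z$ is the real content of local perfectness, and your writeup currently replaces it with a topological assertion that is false in the Zariski topology.
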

		See Proposition \ref{nodal} for a more detailed statement. And the second type of counterexample:
		\begin{proposition}
			Let $A$ be a Dedekind domain with distinct primes $\mathfrak{p},\mathfrak{q}\subset A$ such that $[\mathfrak{p}]=[\mathfrak{q}]\in\textnormal{Cl}(A)\setminus 2\textnormal{Cl}(A)$. Let $X=\textnormal{Spec}\left(A[\varepsilon]/(\varepsilon^2)\right)$. Then there exists an element in $K_{0}'(X)$ that is locally perfect but not perfect.
		\end{proposition}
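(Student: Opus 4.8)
The plan is to write $B=A[\varepsilon]/(\varepsilon^2)$, so that $X=\operatorname{Spec}B$ is a square-zero thickening of the regular curve $\operatorname{Spec}A$ along the nilpotent ideal $(\varepsilon)$, and to take as counterexample the class of the residue field, $[\alpha]:=[A/\mathfrak p]\in K_0'(X)$, where $A/\mathfrak p=B/(\mathfrak p,\varepsilon)$ is viewed as a $B$-module. The first step is to identify $K_0'(X)$ together with the image of the Cartan map $c\colon K_0(X)\to K_0'(X)$. Since $(\varepsilon)$ is nilpotent, dévissage gives $K_0'(X)=G_0(B)\cong G_0(A)$, and as $A$ is regular of dimension $1$ we have $G_0(A)=K_0(A)\cong\mathbf Z\oplus\operatorname{Cl}(A)$, under which $[A/\mathfrak p]$ corresponds to $(0,-[\mathfrak p])$. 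The key computation is that the image of $c$ is contained in (in fact equal to) $2\,G_0(A)$: a perfect complex on the affine scheme $X$ is a bounded complex $\mathcal E$ of finitely generated projective $B$-modules, and tensoring $0\to(\varepsilon)\to B\to A\to 0$ with $\mathcal E$ stays exact by flatness and identifies $\varepsilon\mathcal E\cong\mathcal E\otimes_B A=:\overline{\mathcal E}$, so that $[\mathcal E]=[\overline{\mathcal E}]+[\varepsilon\mathcal E]=2[\overline{\mathcal E}]$ in $G_0(A)$. Thus every perfect class is divisible by $2$.

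Granting this, non-perfectness is immediate: $[\alpha]=(0,-[\mathfrak p])$ lies in $2\,G_0(A)=2\mathbf Z\oplus 2\operatorname{Cl}(A)$ if and only if $[\mathfrak p]\in 2\operatorname{Cl}(A)$, which fails by the hypothesis $[\mathfrak p]\in\operatorname{Cl}(A)\setminus 2\operatorname{Cl}(A)$. Hence $[\alpha]$ is not perfect.

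For local perfectness I would cover $X$ by the two opens $U_{\mathfrak p}=X\setminus\{\mathfrak p\}$ and $U_{\mathfrak q}=X\setminus\{\mathfrak q\}$, which cover $X$ precisely because $\mathfrak p\neq\mathfrak q$. On $U_{\mathfrak p}$ the sheaf $A/\mathfrak p$ is supported off the open set, so $[\alpha]|_{U_{\mathfrak p}}=0$ is trivially perfect. On $U_{\mathfrak q}$ the reduced scheme is $\operatorname{Spec}A\setminus\{\mathfrak q\}$, whose class group is $\operatorname{Cl}(A)/\langle[\mathfrak q]\rangle$ by the localization sequence; under restriction $[\alpha]$ maps to $(0,-[\mathfrak p])$, and since $[\mathfrak p]=[\mathfrak q]\equiv 0$ in this quotient we again get $[\alpha]|_{U_{\mathfrak q}}=0$, hence perfect. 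This is exactly where the equality $[\mathfrak p]=[\mathfrak q]$ is used: deleting $\mathfrak q$ trivializes the class of $\mathfrak p$. Therefore $[\alpha]$ is locally perfect but not perfect.

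The step I expect to be the main obstacle is pinning down the image of $c$ — equivalently, understanding perfect complexes on the non-regular scheme $X$ well enough to see that all their classes are even; the divisibility-by-$2$ argument via the square-zero filtration is the heart of the matter, while dévissage, the structure of $G_0$ of a Dedekind domain, and the excision computation of $\operatorname{Cl}$ on the opens are standard. A secondary point to verify carefully is that the restriction maps are compatible with the dévissage identifications, so that $[A/\mathfrak p]$ genuinely restricts to the class $(0,-[\mathfrak p])$ in the class group of each open.
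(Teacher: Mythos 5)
Your proposal is correct, and it follows the same overall strategy as the paper's Corollary \ref{thick}: identify $K_{0}'(B)\cong K_{0}'(A)\cong\mathbf{Z}\oplus\textnormal{Cl}(A)$ by d\'evissage along the nilpotent ideal $(\varepsilon)$, show that perfect classes land in the even part, pick an element whose $\textnormal{Cl}(A)$-component is $\pm[\mathfrak{p}]$, and cover $\textnormal{Spec}(B)$ by the complements of the two closed points, using $[\mathfrak{p}]=[\mathfrak{q}]$ on the second open. Your element $[A/\mathfrak{p}]$ differs from the paper's $[\mathfrak{p}']=[(\varepsilon,\mathfrak{p})]$ only by the perfect class $[\mathcal{O}_{X}]$, so the two counterexamples are interchangeable. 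The genuine difference is in the key step, computing the image of $K_{0}(B)\to K_{0}'(B)$. The paper establishes that this image is exactly $2K_{0}'(B)$ by first proving $K_{0}(A)\to K_{0}(B)$ is an isomorphism (Lemma \ref{surjective}, via a lifting argument for projective modules and the five lemma) and then chasing the commutative square of Proposition \ref{two}, in which flat base change $K_{0}'(A)\to K_{0}'(B)$ is multiplication by $2$ (Lemma \ref{double}). You instead argue directly on a bounded complex of finitely generated projective $B$-modules: each term $E$ sits in $0\to\varepsilon E\to E\to E/\varepsilon E\to 0$ with $\varepsilon E\cong E\otimes_{B}(\varepsilon)\cong E/\varepsilon E$ (using $(\varepsilon)\cong B/(\varepsilon)$ and flatness of $E$), so $[E]=2[E/\varepsilon E]$ in $G_{0}$. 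This is shorter and bypasses Lemma \ref{surjective} entirely; it yields only the containment $\textnormal{im}(K_{0}(B))\subseteq 2K_{0}'(B)$ rather than equality, but containment is all the counterexample requires (and equality follows easily anyway, since $M\otimes_{A}B$ is $B$-projective for $M$ $A$-projective and $A$ is regular). Your verification of local perfectness, showing the class vanishes on $U_{\mathfrak{q}}$ because $[\mathfrak{p}]=[\mathfrak{q}]$ dies in $\textnormal{Cl}(A)/\langle[\mathfrak{q}]\rangle$, is the same idea as the paper's observation that $\mathfrak{p}'$ and $\mathfrak{q}'$ have equal, trivial restrictions to $U_{2}$; the compatibility of the d\'evissage isomorphism with restriction to opens, which you rightly flag, holds because the identification is pushforward along the closed immersion $\textnormal{Spec}(A)\to\textnormal{Spec}(B)$, which commutes with flat restriction.
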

		See Corollary \ref{thick} for a more detailed statement.
		
		\vspace{12pt}
		
		\noindent \textbf{Acknowledgements.}
		Thank you to Andrei Okounkov for suggesting this problem, to Johan de Jong for his invaluable guidance throughout the project, and to Elden Elmanto and Matthew Hase-Liu for helpful conversations.
		
	\section{Background}
		Most of the material in this section is stated and proved in \cite[\href{https://stacks.math.columbia.edu/tag/0FDE}{Tag 0FDE}]{stacks-project}.
		
		Let $X$ be a Noetherian scheme, and $D(X)=D^{b}(\textnormal{Coh}(X))$ be the bounded derived category of coherent sheaves on $X$. Let $D_{\textnormal{perf}}(X)$ be the full triangulated subcategory of perfect objects, i.e., those locally represented by bounded complexes of locally free coherent sheaves. 
		
		Let $K_{0}'(X):=K_{0}(D(X))$, i.e., the free abelian group on objects of $D(X)$ modulo the relations generated by $[B]-[A]-[C]$ for each distinguished triangle 
		$$A\to B\to C$$
		This group is sometimes also referred to as $G_{0}(X)$. Likewise, let $K_{0}(X):=K_{0}(D_{\textnormal{perf}}(X))$. For a ring $R$, we will let $K_{0}'(R):=K_{0}'(\textnormal{Spec}(R))$ and $K_{0}(R):=K_{0}(\textnormal{Spec}(R))$.
		
		Conveniently, $K_{0}'(X)=K_{0}(\textnormal{Coh}(X))$, i.e., the free abelian group generated by objects of $\textnormal{Coh}(X)$ modulo the relations generated by $[M]-[M']-[M'']$ for each exact short sequence 
		$$0\to M'\to M\to M''\to 0$$
		Likewise, for quasi-projective schemes,\footnote{More generally, for quasi-compact quasi-separated schemes with the resolution property.}  $K_{0}(X)=K_{0}(\textnormal{Vect}(X))$, where $\textnormal{Vect}(X)$ is the category of finite locally free sheaves.

		There is a natural inclusion map $K_{0}(X)\to K_{0}'(X)$. If $X$ is regular, then this map is surjective. Thus, since we are concerned with elements in the complement of the image of this map, we must consider singular schemes. 
		
		A \emph{finite} morphism $f:X\to Y$ induces a map $f_{*}:K_{0}'(X)\to K_{0}'(Y)$, since restriction of modules along a finite map preserves finiteness and $f_{*}$ is exact for $f$ finite. 
		
		A \emph{flat} morphism $f:X\to Y$ induces a map $f^{*}:K_{0}'(Y)\to K_{0}'(X)$, since base change along a flat map preserves finiteness and $f^{*}$ is exact. 
		
		\emph{Any} morphism $f:X\to Y$ induces a map $f^{*}:K_{0}(Y)\to K_{0}(X)$, since pullbacks of perfect complexes are perfect.
		
		Finally, the rank and determinant maps on $\textnormal{Vect}(X)$ descend to maps $\textnormal{rk}:K_{0}(X)\to\mathbf{Z}$ and $\det:K_{0}(X)\to \textnormal{Pic}(X)$ since they are additive on exact sequences. These maps are surjective and compatible with pullbacks.
		
		\begin{remark}
			The following is not necessary for our results, but provides an obstruction to constructing counterexamples. For a curve $X$, i.e., an integral separated scheme of dimension one of finite type over an algebraically closed field, there is a map from the divisor class group to the Grothendieck group
			$$\textnormal{Cl}(X)\to K_{0}'(X),\quad [x]\mapsto[\kappa(x)]$$ 
			If $x\in X$ is a singular point but $[x]=\sum_{i}[x_{i}]$ in $\textnormal{Cl}(X)$ and each $x_{i}$ is non-singular, then $[x]=\sum_{i}[x_{i}]\in\textnormal{im}(K_{0}(X))\subset K_{0}'(X)$. 
			
			Thus, we did not find any counterexamples on curves --- though it may be possible to do so on a singular curve with a divisor such that no linearly equivalent divisor is supported on the non-singular locus. 
		\end{remark}
		
	\section{Main Counterexamples}		
		Let $X$ be a Noetherian scheme with open non-singular locus $U$. Recall we have the following restriction/determinant/inclusion maps:
		$$
		\begin{tikzcd}
			K_{0}(X) \arrow[r, "\textnormal{res}"] \arrow[d, "\det"', two heads] & K_{0}(U) \arrow[d, "\det", two heads] & K_{0}(X) \arrow[r, "\textnormal{res}"] \arrow[d, "\textnormal{inc}"'] & K_{0}(U) \arrow[d, "\cong"', "\textnormal{inc}"] \\
			\textnormal{Pic}(X) \arrow[r, "\textnormal{res}"]                          & \textnormal{Pic}(U)                         & K_{0}'(X) \arrow[r, "\textnormal{res}"]                            & K_{0}'(U)                   
		\end{tikzcd}$$
		Note that $K_{0}(U)\xrightarrow{\cong}K_{0}'(U)$ since $U$ is regular.
		\begin{lemma}\label{image}
			Let $[\mathcal{M}]\in K_{0}'(X)$. Then 
			$$\det[\mathcal{M}|_{U}]\in\textnormal{Pic}(U)\setminus\textnormal{im}(\textnormal{Pic}(X))\implies[\mathcal{M}]\in K_{0}'(X)\setminus\textnormal{im}(K_{0}(X))$$
		\end{lemma}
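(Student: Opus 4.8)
The plan is to prove the contrapositive: assuming $[\mathcal{M}]$ is perfect---i.e.\ $[\mathcal{M}]=\textnormal{inc}([\mathcal{N}])$ for some $[\mathcal{N}]\in K_{0}(X)$---I will show that $\det[\mathcal{M}|_{U}]$ lies in the image of $\textnormal{res}\colon\textnormal{Pic}(X)\to\textnormal{Pic}(U)$. The argument is a diagram chase through the two squares displayed before the lemma, using only the compatibilities recorded in the Background section.

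First I would restrict to $U$. Since restriction to an open subscheme is flat, it commutes with the inclusion $\textnormal{inc}\colon K_{0}\to K_{0}'$, so the right-hand square gives $[\mathcal{M}|_{U}]=\textnormal{res}(\textnormal{inc}[\mathcal{N}])=\textnormal{inc}(\textnormal{res}[\mathcal{N}])$ in $K_{0}'(U)$. Because $U$ is regular, the inclusion $K_{0}(U)\to K_{0}'(U)$ is an isomorphism, so I may identify $[\mathcal{M}|_{U}]$ with $\textnormal{res}[\mathcal{N}]\in K_{0}(U)$. This step is what makes the expression $\det[\mathcal{M}|_{U}]$ meaningful in the first place: $\det$ is defined on $K_{0}$ via $\textnormal{Vect}$, not on all of $K_{0}'$, and passing to the regular locus $U$ is precisely what lets the two coincide.

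Next I would apply the determinant. By the compatibility of $\det$ with pullback---restriction along the open immersion $U\hookrightarrow X$ being such a pullback---the left-hand square commutes, giving $\det[\mathcal{M}|_{U}]=\det(\textnormal{res}[\mathcal{N}])=\textnormal{res}(\det[\mathcal{N}])$. Since $\det[\mathcal{N}]\in\textnormal{Pic}(X)$, this exhibits $\det[\mathcal{M}|_{U}]$ as an element of $\textnormal{im}(\textnormal{res}\colon\textnormal{Pic}(X)\to\textnormal{Pic}(U))$, which is exactly the negation of the hypothesis and completes the contrapositive.

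There is no substantive obstacle here: the lemma is a formal consequence of the commutativity of the two squares together with the regularity of $U$. The only point demanding care is the one flagged above---the determinant is not a priori defined on $K_{0}'(X)$, so the statement only typechecks after restricting to $U$, where $K_{0}\cong K_{0}'$; keeping track of where each map lives is the entirety of the bookkeeping.
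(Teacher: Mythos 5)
Your proof is correct and is essentially identical to the paper's: both argue by contradiction/contraposition, restricting a preimage $[\mathcal{N}]\in K_{0}(X)$ to $U$, identifying $[\mathcal{M}|_{U}]$ with $\textnormal{res}[\mathcal{N}]$ via the isomorphism $K_{0}(U)\cong K_{0}'(U)$, and then using compatibility of $\det$ with restriction to exhibit $\det[\mathcal{M}|_{U}]$ as $\textnormal{res}(\det[\mathcal{N}])\in\textnormal{im}(\textnormal{Pic}(X))$. Your remark about where $\det$ is actually defined is a useful clarification that the paper leaves implicit.
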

		\begin{proof}
			 Suppose $[\mathcal{M}]$ is the image of some $[M]\in K_{0}(X)$. Then $[\mathcal{M}|_{U}]\in K_{0}(U)$ is the image of $[M]$. Thus $\det[\mathcal{M}|_{U}]$ is the image of $[M]$ under $K_{0}(X)\to K_{0}(U)\to\textnormal{Pic}(U)$, and is therefore the image of $\det[M]$ under $\textnormal{Pic}(X)\to\textnormal{Pic}(U)$, which is a contradiction.
		\end{proof}
		In the rest of this
		section we work with varieties and schemes over a fixed field $k$. Now, let $X:=Y\sqcup_{C}Z$, where $Y$ and $Z$ are smooth surfaces glued along a smooth curve $C\subset Y,Z$. Let $i_{Y},i_{Z}:C\to Y,Z$ be the closed immersions. Then $U=X\setminus C=(Y\setminus C)\sqcup(Z\setminus C)$.
		\begin{lemma}\label{Pic}
			In the above setup,
			$$\textnormal{Pic}(X)=\{(a,b)\in\textnormal{Pic}(Y)\times\textnormal{Pic}(Z):i_{Y}^{*}a=i_{Z}^{*}b\in\textnormal{Pic}(C)\}$$
			$$\textnormal{Pic}(U)=(\textnormal{Pic}(Y)/\langle[C]\rangle)\times(\textnormal{Pic}(Z)/\langle[C]\rangle) $$
			with the obvious restriction map.
		\end{lemma}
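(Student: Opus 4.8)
The plan is to establish the two equalities separately: the formula for $\textnormal{Pic}(U)$ follows from excision on the smooth pieces, while the formula for $\textnormal{Pic}(X)$ comes from the Mayer--Vietoris sequence attached to the pushout square.

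For $\textnormal{Pic}(U)$: since $U=(Y\setminus C)\sqcup(Z\setminus C)$ is a disjoint union of open subschemes, its Picard group splits as $\textnormal{Pic}(Y\setminus C)\times\textnormal{Pic}(Z\setminus C)$. As $Y$ is smooth I may identify $\textnormal{Pic}(Y)=\textnormal{Cl}(Y)$, and the excision sequence for divisor class groups
$$\mathbf{Z}\to\textnormal{Cl}(Y)\to\textnormal{Cl}(Y\setminus C)\to 0,\qquad 1\mapsto[C]$$
identifies $\textnormal{Pic}(Y\setminus C)$ with $\textnormal{Pic}(Y)/\langle[C]\rangle$ (here I use that $C$ is an irreducible smooth divisor, so that $[C]$ is a single class). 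The same applies to $Z$, giving the second formula, and tracing through the identifications shows the restriction map is the evident projection followed by reduction modulo $\langle[C]\rangle$ in each factor.

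For $\textnormal{Pic}(X)$: I would use that $Y\hookrightarrow X$ and $Z\hookrightarrow X$ are closed immersions with $Y\cap Z=C$, so that on $X$ there is a short exact sequence of sheaves of units
$$1\to\mathcal{O}_{X}^{*}\to\mathcal{O}_{Y}^{*}\oplus\mathcal{O}_{Z}^{*}\to\mathcal{O}_{C}^{*}\to 1$$
(where I suppress the exact pushforwards along the closed immersions, and the second map sends $(f,g)\mapsto i_{Y}^{*}f\cdot(i_{Z}^{*}g)^{-1}$). Exactness can be checked on stalks: away from $C$ it is immediate, and at a point $c\in C$ surjectivity holds because $\mathcal{O}_{Z,c}\to\mathcal{O}_{C,c}$ is a surjection of local rings along which units lift to units, while the kernel is exactly $\mathcal{O}_{X}^{*}=\mathcal{O}_{Y}^{*}\times_{\mathcal{O}_{C}^{*}}\mathcal{O}_{Z}^{*}$. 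Taking cohomology on $X$, and using that pushforward along a closed immersion is exact and preserves cohomology, the long exact sequence reads, in the relevant range,
$$\Gamma(Y,\mathcal{O}_{Y}^{*})\oplus\Gamma(Z,\mathcal{O}_{Z}^{*})\to\Gamma(C,\mathcal{O}_{C}^{*})\xrightarrow{\partial}\textnormal{Pic}(X)\to\textnormal{Pic}(Y)\oplus\textnormal{Pic}(Z)\xrightarrow{\delta}\textnormal{Pic}(C)$$
with $\delta(a,b)=i_{Y}^{*}a\otimes(i_{Z}^{*}b)^{-1}$. Hence the image of $\textnormal{Pic}(X)$ is precisely $\ker\delta=\{(a,b):i_{Y}^{*}a=i_{Z}^{*}b\}$, the asserted right-hand side.

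It then remains to prove injectivity of $\textnormal{Pic}(X)\to\textnormal{Pic}(Y)\oplus\textnormal{Pic}(Z)$, i.e. that $\partial=0$, and this is the step I expect to be the crux: it says there is no residual gluing ambiguity, so that a line bundle on $X$ is determined up to isomorphism by its restrictions to $Y$ and $Z$. By exactness, $\partial=0$ iff $\Gamma(Y,\mathcal{O}_{Y}^{*})\oplus\Gamma(Z,\mathcal{O}_{Z}^{*})\to\Gamma(C,\mathcal{O}_{C}^{*})$ is surjective. Here I would invoke that in our applications $Y$, $Z$, and $C$ are projective and connected over $k$, so that all three global unit groups are just $k^{*}$ and the map becomes $(u,v)\mapsto uv^{-1}$, which is visibly surjective. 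This properness hypothesis is exactly what makes the two descriptions match; without it $\Gamma(C,\mathcal{O}_{C}^{*})$ could contribute a nonzero cokernel and the restriction map need not be injective. Combining the image computation with this injectivity yields the first formula.
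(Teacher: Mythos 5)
Your proof is correct, and it is substantially more detailed than the paper's, which simply asserts ``$\textnormal{Pic}(X)=\textnormal{Pic}(Y)\times_{\textnormal{Pic}(Y\cap Z)}\textnormal{Pic}(Z)$ is a general fact'' and does not address the $\textnormal{Pic}(U)$ formula at all. Your excision argument for $\textnormal{Pic}(U)$ and your Mayer--Vietoris sequence of unit sheaves for $\textnormal{Pic}(X)$ are exactly the right mechanisms, and in effect you are proving the ``general fact'' the paper cites rather than invoking it. More importantly, you have correctly isolated the point the paper glosses over: the fiber-product description is \emph{not} true for an arbitrary gluing of smooth surfaces along a smooth curve, since the connecting map $\partial$ is killed only when $\Gamma(Y,\mathcal{O}_{Y}^{*})\oplus\Gamma(Z,\mathcal{O}_{Z}^{*})\to\Gamma(C,\mathcal{O}_{C}^{*})$ surjects (gluing two affine planes along a hyperbola $C\cong\mathbf{G}_{m}$ gives a nontrivial cokernel $\mathbf{Z}$ and hence a non-injective restriction $\textnormal{Pic}(X)\to\textnormal{Pic}(Y)\times\textnormal{Pic}(Z)$). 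Your appeal to properness and connectedness, which forces all three unit groups to be $k^{*}$, is precisely the hypothesis under which the lemma is used later ($Y\cong Z\cong C\times\mathbf{P}^{1}$ with $C$ projective), so your version is both correct and more honest about what is being assumed. The only cosmetic quibble is that one should also note $\Gamma(\mathcal{O})=k$ requires the pieces to be geometrically connected and reduced (automatic here), but this does not affect the argument.
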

		\begin{proof}
			$\textnormal{Pic}(X)=\textnormal{Pic}(Y)\times_{\textnormal{Pic}(Y\cap Z)}\textnormal{Pic}(Z)$ is a general fact.
		\end{proof}
		The following lemma will be used to show local perfectness.
		\begin{lemma}\label{perfect}
			Let $D\subset Y$ be a prime divisor, and let $\mathcal{I}_{D}\subset\mathcal{O}_{X}$ be its ideal sheaf in $X$. If $\mathcal{O}_{Y}(D)\cong\mathcal{O}_{Y}$, then $[\mathcal{I}_{D}]\in\textnormal{im}(K_{0}(X))\subset K_{0}'(X)$.  
		\end{lemma}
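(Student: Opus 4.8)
The plan is to prove the sharper statement that $[\mathcal{I}_D]=[\mathcal{O}_X]$ already holds in $K_0'(X)$, which settles the lemma since $\mathcal{O}_X$ is a line bundle and hence $[\mathcal{O}_X]\in\textnormal{im}(K_0(X))$ trivially. The starting point is the structure sequence of $D$ as a closed subscheme of $X$,
$$0\to\mathcal{I}_D\to\mathcal{O}_X\to\mathcal{O}_D\to 0,$$
which yields $[\mathcal{I}_D]=[\mathcal{O}_X]-[\mathcal{O}_D]$ in $K_0'(X)$. So it suffices to show $[\mathcal{O}_D]=0$ in $K_0'(X)$. The reason this takes a small argument rather than a one-line Koszul resolution on $X$ is that, at points of $D\cap C$, the divisor $D$ need not be Cartier on the singular scheme $X$ (locally $\mathcal{I}_D$ looks like $(u,w)\subset k[[u,v,w]]/(vw)$, which is not principal), so one cannot resolve $\mathcal{O}_D$ by vector bundles directly on $X$. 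The idea is instead to perform the computation on the smooth surface $Y$ and transport it.

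Let $j_Y:Y\to X$ denote the closed immersion. First I would identify the quotient $\mathcal{O}_X/\mathcal{I}_D$ with $j_{Y*}\mathcal{O}_D$, where $\mathcal{O}_D$ now denotes the structure sheaf of $D$ viewed as a subscheme of $Y$. This is immediate away from $C$, where $X$ and $Y$ agree as schemes; the only genuine verification is at the points of $D\cap C$, where $\mathcal{O}_{X,x}=\mathcal{O}_{Y,x}\times_{\mathcal{O}_{C,x}}\mathcal{O}_{Z,x}$. There a short stalk computation, using that $\mathcal{O}_{Z,x}\to\mathcal{O}_{C,x}$ is surjective, shows that projection to the $Y$-factor induces an isomorphism $\mathcal{O}_{X,x}/\mathcal{I}_{D,x}\cong\mathcal{O}_{Y,x}/\mathcal{I}_{D/Y,x}$.

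Since $Y$ is a smooth surface, the prime divisor $D$ is an effective Cartier divisor with $\mathcal{I}_{D/Y}=\mathcal{O}_Y(-D)$, and the hypothesis $\mathcal{O}_Y(D)\cong\mathcal{O}_Y$ gives $\mathcal{O}_Y(-D)\cong\mathcal{O}_Y$ by dualizing. Hence on $Y$ there is a free resolution
$$0\to\mathcal{O}_Y\to\mathcal{O}_Y\to\mathcal{O}_D\to 0.$$
Because $j_Y$ is finite, the functor $j_{Y*}$ is exact and induces a homomorphism $K_0'(Y)\to K_0'(X)$; applying it to this short exact sequence gives $[j_{Y*}\mathcal{O}_D]=[j_{Y*}\mathcal{O}_Y]-[j_{Y*}\mathcal{O}_Y]=0$ in $K_0'(X)$. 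Combined with the identification of the previous paragraph, this is exactly $[\mathcal{O}_D]=0$, and therefore $[\mathcal{I}_D]=[\mathcal{O}_X]\in\textnormal{im}(K_0(X))$. The main obstacle is conceptual rather than computational: one must avoid trying to resolve $\mathcal{I}_D$ on $X$ itself and instead realize that the \emph{class} $[\mathcal{O}_D]$ is killed by pushing a free resolution forward from the smooth $Y$ along the exact functor $j_{Y*}$; the only point requiring care is the stalk identification of $\mathcal{O}_X/\mathcal{I}_D$ at the singular points $D\cap C$.
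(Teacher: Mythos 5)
Your proposal is correct and follows essentially the same route as the paper: both push forward the resolution $0\to\mathcal{O}_{Y}(-D)\to\mathcal{O}_{Y}\to\mathcal{O}_{D}\to0$ along the finite closed immersion $Y\to X$, use $\mathcal{O}_{Y}(-D)\cong\mathcal{O}_{Y}$ to conclude $[\mathcal{O}_{X}/\mathcal{I}_{D}]=0$ in $K_{0}'(X)$, and deduce $[\mathcal{I}_{D}]=[\mathcal{O}_{X}]$. The only difference is that you spell out the stalk identification $\mathcal{O}_{X}/\mathcal{I}_{D}\cong i_{*}(\mathcal{O}_{Y}/\mathcal{O}_{Y}(-D))$ at the points of $D\cap C$, which the paper asserts without comment.
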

		\begin{proof}
			Let $i:Y\to X$ be the closed immersion. Since this is finite, there is an induced map $i_{*}:K_{0}'(Y)\to K_{0}'(X)$. Under this map, 
			$$i_{*}([\mathcal{O}_{Y}/\mathcal{O}_{Y}(-D)])=[\mathcal{O}_{X}/\mathcal{I}_{D}]$$
			Under the hypothesis of the lemma the preimage is zero so $0=[\mathcal{O}_{X}/\mathcal{I}_{D}]=[\mathcal{O}_{X}]-[\mathcal{I}_{D}]\in K_{0}'(X)$ and $[\mathcal{I}_{D}]=[\mathcal{O}_{X}]\in\textnormal{im}(K_{0}(X))$.
		\end{proof}
		We now specialize to $Y\cong Z\cong C\times\mathbf{P}^{1}$ and $X:=Y\sqcup_{C\times\{0\}}Z$ (abuse of notation).
		\begin{lemma}\label{diagonal}
			With $X$ as above, 
			$$\textnormal{Pic}(U)=\textnormal{Pic}(C)\oplus\textnormal{Pic}(C)$$ 
			$$\textnormal{im}(\textnormal{Pic}(X)\to\textnormal{Pic}(\textnormal{U}))=\{(L,L):L\in\textnormal{Pic}(C)\} $$
		\end{lemma}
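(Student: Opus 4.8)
The plan is to compute both sides explicitly using the product structure of $Y\cong Z\cong C\times\mathbf{P}^1$ together with Lemma \ref{Pic}, and then to observe that the restriction to the fiber $C\times\{0\}$ and the quotient by $\langle[C]\rangle$ are, after the natural identification, the very same projection onto $\textnormal{Pic}(C)$.

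First I would record the Picard group of a product with $\mathbf{P}^1$. Writing $p\colon C\times\mathbf{P}^1\to C$ and $\pi\colon C\times\mathbf{P}^1\to\mathbf{P}^1$ for the two projections, the seesaw/projective-bundle computation gives a canonical isomorphism
$$\textnormal{Pic}(Y)\cong\textnormal{Pic}(C)\oplus\mathbf{Z},\qquad (L,n)\mapsto p^{*}L\otimes\pi^{*}\mathcal{O}_{\mathbf{P}^1}(n),$$
and likewise for $Z$. Under this identification the fiber $C\times\{0\}=\pi^{-1}(0)$ has class $[C]=(0,1)$, since $\mathcal{O}_{Y}(C\times\{0\})=\pi^{*}\mathcal{O}_{\mathbf{P}^1}(1)$. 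Hence $\textnormal{Pic}(Y)/\langle[C]\rangle\cong\textnormal{Pic}(C)$, and Lemma \ref{Pic} immediately yields the first displayed equality $\textnormal{Pic}(U)=\textnormal{Pic}(C)\oplus\textnormal{Pic}(C)$.

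Next I would identify the two maps out of $\textnormal{Pic}(Y)$ that govern the restriction $\textnormal{Pic}(X)\to\textnormal{Pic}(U)$. The quotient map $\textnormal{Pic}(Y)\to\textnormal{Pic}(Y)/\langle[C]\rangle\cong\textnormal{Pic}(C)$ is simply the projection $(L,n)\mapsto L$. The key point is that the restriction $i_{Y}^{*}\colon\textnormal{Pic}(Y)\to\textnormal{Pic}(C)$ along $i_{Y}\colon C=C\times\{0\}\hookrightarrow C\times\mathbf{P}^1$ is the \emph{same} projection: since $p\circ i_{Y}=\textnormal{id}_{C}$ we get $i_{Y}^{*}p^{*}L=L$, while $\pi\circ i_{Y}$ is constant, so $i_{Y}^{*}\pi^{*}\mathcal{O}_{\mathbf{P}^1}(n)=\mathcal{O}_{C}$ is trivial. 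Thus $i_{Y}^{*}(L,n)=L$, and identically $i_{Z}^{*}(L,n)=L$.

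Finally I would combine these. By Lemma \ref{Pic} an element of $\textnormal{Pic}(X)$ is a pair $(a,b)=((L_{a},n_{a}),(L_{b},n_{b}))$ satisfying $i_{Y}^{*}a=i_{Z}^{*}b$, which by the previous step is exactly the condition $L_{a}=L_{b}$ in $\textnormal{Pic}(C)$. Its image in $\textnormal{Pic}(U)=\textnormal{Pic}(C)\oplus\textnormal{Pic}(C)$ is $(L_{a},L_{b})$, which therefore lies on the diagonal; conversely every $(L,L)$ is hit, for instance by $a=b=(L,0)$. This gives the second displayed equality. I expect no serious obstacle: the content is entirely in the bookkeeping of the identifications, and the single fact that must be got right is that restricting to the fiber $C\times\{0\}$ annihilates the $\mathbf{Z}=\pi^{*}\textnormal{Pic}(\mathbf{P}^1)$ summand, which is precisely what forces both the gluing constraint and the restriction map to land in the same $\textnormal{Pic}(C)$ factor.
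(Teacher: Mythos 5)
Your proof is correct and follows essentially the same route as the paper: identify $\textnormal{Pic}(C\times\mathbf{P}^{1})\cong\textnormal{Pic}(C)\oplus\mathbf{Z}$, observe that both the restriction $i_{C\times\{0\}}^{*}$ and the quotient by $\langle[C]\rangle$ become the projection onto $\textnormal{Pic}(C)$, and then apply Lemma \ref{Pic}. Yours merely spells out the bookkeeping that the paper's terse proof leaves implicit.
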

		\begin{proof}
			Note that $\textnormal{Pic}(C\times\mathbf{P}^{1})\cong\textnormal{Pic}(C)\oplus\mathbf{Z}$, the class $[C\times\{0\}]$ is a generator of $\mathbf{Z}$, and $i_{C\times\{0\}}^{*}$ is the projection $\textnormal{Pic}(C)\oplus\mathbf{Z}\xrightarrow{\pi}\textnormal{Pic}(C)$. Thus, by Lemma \ref{Pic},
			$$\textnormal{Pic}(X)=\textnormal{Pic}(C)\oplus\mathbf{Z}\oplus\mathbf{Z},\quad \textnormal{Pic}(U)=\textnormal{Pic}(C)\oplus\textnormal{Pic}(C) $$
			and restriction $\textnormal{Pic}(X)\to\textnormal{Pic}(U)$ is $\textnormal{Pic}(C)\oplus\mathbf{Z}\oplus\mathbf{Z}\xrightarrow{\pi}\textnormal{Pic}(C)\xrightarrow{\Delta}\textnormal{Pic}(C)\oplus\textnormal{Pic}(C) $.
		\end{proof}
	
		Now we can construct our counterexample.
		\begin{proposition}\label{glued}
			Let $p\in C$ such that $[p]\neq0\in\textnormal{Pic}(C)$. Let $D:=\{p\}\times\mathbf{P}^{1}\subset Y\subset X$, and let $\mathcal{M}:=\mathcal{I}_{D}\subset\mathcal{O}_{X}$ be its ideal sheaf in $X$. Then \begin{enumerate}
				\item $[\mathcal{M}]\in K_{0}'(X)\setminus\textnormal{im}(K_{0}(X))$, and \item $[\mathcal{M}|_{V_{i}}]\in\textnormal{im}(K_{0}(V_{i}))$ for $\bigcup_{i}V_{i}=X$ an open cover (defined below).
			\end{enumerate}						
		\end{proposition}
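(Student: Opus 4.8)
The plan is to treat the two assertions separately, feeding the global obstruction into Lemma \ref{image} for (1) and localizing it via Lemma \ref{perfect} for (2).

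For part (1), I would invoke Lemma \ref{image}, so it suffices to compute $\det[\mathcal{M}|_{U}]\in\textnormal{Pic}(U)$ and check that it avoids $\textnormal{im}(\textnormal{Pic}(X))$. Writing $U=(Y\setminus C)\sqcup(Z\setminus C)$ and restricting the ideal sheaf, the key observation is that $D$ meets $U$ only in the $Y$-component: its intersection with the singular locus $C\times\{0\}$ is the single point $(p,0)$, so $D\cap U=\{p\}\times(\mathbf{P}^{1}\setminus\{0\})$ lies entirely in $Y\setminus C\cong C\times\mathbf{A}^{1}$. On that piece $\mathcal{M}|_{Y\setminus C}$ is the ideal sheaf of the smooth divisor $\{p\}\times\mathbf{A}^{1}$, a line bundle whose determinant is $\mathcal{O}(-(\{p\}\times\mathbf{A}^{1}))$, which under $\textnormal{Pic}(Y\setminus C)\cong\textnormal{Pic}(C)$ equals $-[p]$; on $Z\setminus C$ the divisor is absent, so $\mathcal{M}|_{Z\setminus C}\cong\mathcal{O}$ with trivial determinant. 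Under the identification $\textnormal{Pic}(U)=\textnormal{Pic}(C)\oplus\textnormal{Pic}(C)$ of Lemma \ref{diagonal} this gives $\det[\mathcal{M}|_{U}]=(-[p],0)$. Since Lemma \ref{diagonal} identifies $\textnormal{im}(\textnormal{Pic}(X))$ with the diagonal and $[p]\neq0$, this class is off the diagonal, and Lemma \ref{image} yields $[\mathcal{M}]\in K_{0}'(X)\setminus\textnormal{im}(K_{0}(X))$.

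For part (2), I would build the cover from the projection $\pi:X\to C$ (the two maps $C\times\mathbf{P}^{1}\to C$ agree on the gluing locus, hence descend). Set $V_{1}:=\pi^{-1}(C\setminus\{p\})$ and $V_{2}:=\pi^{-1}(W)$, where $W\ni p$ is an open neighborhood on which $\mathcal{O}_{C}(p)$ is trivial; such $W$ exists because on a smooth curve a single point is locally principal (take the complement of the support of $\textnormal{div}(t)-p$ for $t$ a uniformizer at $p$). These cover $X$ since $(C\setminus\{p\})\cup W=C$. On $V_{1}$ the divisor $D$ does not appear, as $\pi(D)=\{p\}$, so $\mathcal{M}|_{V_{1}}\cong\mathcal{O}_{V_{1}}$ is perfect. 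On $V_{2}=(W\times\mathbf{P}^{1})\sqcup_{W\times\{0\}}(W\times\mathbf{P}^{1})$ the construction is again of the form to which Lemma \ref{perfect} applies, with prime divisor $D\cap V_{2}=\{p\}\times\mathbf{P}^{1}$ in the $Y$-copy $W\times\mathbf{P}^{1}$; by the choice of $W$ we have $\mathcal{O}_{W\times\mathbf{P}^{1}}(\{p\}\times\mathbf{P}^{1})=\pi_{W}^{*}\mathcal{O}_{W}(p)\cong\mathcal{O}$, so Lemma \ref{perfect} gives $[\mathcal{M}|_{V_{2}}]=[\mathcal{I}_{D\cap V_{2}}]\in\textnormal{im}(K_{0}(V_{2}))$.

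The routine inputs are the determinant identities (for an effective Cartier divisor $E$ one has $\det[\mathcal{O}_{E}]=\mathcal{O}(E)$, hence $\det[\mathcal{I}_{E}]=\mathcal{O}(-E)$) and the $\mathbf{A}^{1}$-invariance $\textnormal{Pic}(C\times\mathbf{A}^{1})\cong\textnormal{Pic}(C)$ already used in Lemma \ref{diagonal}. The step requiring the most care is part (2): one must verify that $V_{2}$ is genuinely of the form $Y'\sqcup_{C'}Z'$ with smooth surfaces glued along a smooth curve, so that Lemma \ref{perfect} applies verbatim, and that restricting the global ideal sheaf $\mathcal{M}$ to $V_{2}$ agrees with the ideal sheaf $\mathcal{I}_{D\cap V_{2}}$ computed inside $V_{2}$. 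Conceptually the crux is exactly this localization of the obstruction: the class $[p]$ is nonzero in $\textnormal{Pic}(C)$ but becomes trivial after restriction to $W$, which is precisely what allows a globally non-perfect element to be perfect on each $V_{i}$.
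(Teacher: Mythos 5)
Your proposal is correct and takes essentially the same route as the paper: part (1) via Lemma \ref{image} after computing that $\det[\mathcal{M}|_{U}]$ lands off the diagonal of Lemma \ref{diagonal} (your sign $(-[p],0)$ versus the paper's $([p],0)$ is immaterial), and part (2) via Lemma \ref{perfect} applied over an open cover of $C$ trivializing $\mathcal{O}_{C}(p)$. Your explicit two-element cover $\{C\setminus\{p\},\,W\}$ is just a concrete instance of the paper's cover $\{C_{i}\}$.
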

		\begin{proof}
			Note 
			$$\det[\mathcal{M}|_{U}]=\mathcal{O}_{U}(\{p\}\times(\mathbf{P}^{1}\setminus\{0\}))=([p],0)\in\textnormal{Pic}(C)\oplus\textnormal{Pic}(C)$$
			is not in the diagonal, so Lemma \ref{diagonal} and Lemma \ref{image} imply the first claim.
			
			Next, let $\bigcup_{i}C_{i}=C$ be an open cover such that $\mathcal{O}_{C}(p)|_{C_{i}}\cong\mathcal{O}_{C_{i}}$. Let $Y_{i}:= Z_{i}:=C_{i}\times\mathbf{P}^{1}$, set $V_{i}:=Y_{i}\sqcup_{C_{i}\times\{0\}}Z_{i}$, and let $D_{i}:=D\cap Y_{i}$. We have
			$$\textnormal{Pic}(Y_{i})=\textnormal{Pic}(C_{i})\oplus\mathbf{Z}$$
			and $[D_{i}]=([p],0)=0$ by hypothesis. Thus, $\mathcal{O}_{Y_{i}}(D_{i})\cong\mathcal{O}_{Y_{i}}$, so Lemma \ref{perfect} implies the second claim. 
		\end{proof}
		\begin{example}
			Let $C$ be $\mathbf{P}^{1}$. Then any $p\in C$ satisfies the hypotheses of Proposition \ref{glued}.
		\end{example}
		We can use the same idea to construct a counterexample on an irreducible $X$. First we prove a lemma.
		\begin{lemma}\label{locally-trivial}
			Let $\nu:\tilde{W}\to W$ be a finite map of schemes, and let $\mathcal{L}\in\textnormal{Pic}(W)$. Then there is an open cover $\bigcup_{j}W_{j}=W$ such that $\nu_{*}[\nu^{*}\mathcal{O}_{W}]|_{W_{i}}=\nu_{*}[\nu^{*}\mathcal{L}]|_{W_{i}}$ in $K_{0}'(W_{i})$.
		\end{lemma}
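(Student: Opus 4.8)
The plan is to take for the cover any open cover of $W$ that trivializes $\mathcal{L}$. First I would choose $\bigcup_j W_j = W$ with $\mathcal{L}|_{W_j}\cong\mathcal{O}_{W_j}$ for every $j$; such a cover exists because $\mathcal{L}$ is an invertible sheaf. Note that $\nu^*\mathcal{O}_W=\mathcal{O}_{\tilde{W}}$ and that $\nu^*\mathcal{L}$ is again invertible, so both classes $\nu_*[\nu^*\mathcal{O}_W]$ and $\nu_*[\nu^*\mathcal{L}]$ are honest elements of $K_0'(W)$; no flatness of $\nu$ is needed here, since we only ever pull back locally free sheaves, and these have well-defined classes after pullback by any morphism.

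The key input is that finite pushforward is compatible with restriction to opens. Writing $\tilde{W}_j:=\nu^{-1}(W_j)$ and letting $\nu_j:\tilde{W}_j\to W_j$ be the induced finite map, for any coherent sheaf $\mathcal{F}$ on $\tilde{W}$ one has $(\nu_*\mathcal{F})|_{W_j}=(\nu_j)_*(\mathcal{F}|_{\tilde{W}_j})$. This is the flat base change isomorphism along the open immersion $W_j\hookrightarrow W$ (open immersions are flat) combined with the exactness of $\nu_*$ for $\nu$ finite, and it descends to an identity in $K_0'(W_j)$. Applying it to $\mathcal{F}=\nu^*\mathcal{L}$ and using $(\nu^*\mathcal{L})|_{\tilde{W}_j}=\nu_j^*(\mathcal{L}|_{W_j})$, I obtain
$$\nu_*[\nu^*\mathcal{L}]|_{W_j}=(\nu_j)_*\bigl[\nu_j^*(\mathcal{L}|_{W_j})\bigr].$$

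By the choice of cover $\mathcal{L}|_{W_j}\cong\mathcal{O}_{W_j}$, so $\nu_j^*(\mathcal{L}|_{W_j})\cong\mathcal{O}_{\tilde{W}_j}$ and the right-hand side is $(\nu_j)_*[\mathcal{O}_{\tilde{W}_j}]$. Running the identical computation with $\mathcal{O}_W$ in place of $\mathcal{L}$ gives $\nu_*[\nu^*\mathcal{O}_W]|_{W_j}=(\nu_j)_*[\mathcal{O}_{\tilde{W}_j}]$ as well, and comparing the two expressions yields the claimed equality. The lemma is really just the statement that a line bundle is locally trivial, transported through $\nu_*$; the only step that requires any care is the base-change identity $(\nu_*\mathcal{F})|_{W_j}=(\nu_j)_*(\mathcal{F}|_{\tilde{W}_j})$, and since this is standard I expect no genuine obstacle.
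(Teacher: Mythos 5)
Your proposal is correct and follows essentially the same route as the paper: trivialize $\mathcal{L}$ on an open cover and observe that finite pushforward commutes with restriction to opens, so both classes become $(\nu_j)_*[\mathcal{O}_{\tilde{W}_j}]$. The paper verifies the compatibility by taking the cover affine and comparing global sections, while you invoke flat base change along the open immersion directly, but this is only a cosmetic difference.
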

		\begin{proof}
			Let $\{W_{i}\}_{i}$ be an open affine cover such that $\mathcal{L}|_{W_{i}}\cong\mathcal{O}_{W_{i}}$. Let $W_{i}=\text{Spec}(R_{i})$, and let $\nu^{-1}(W_{i})=\text{Spec}(\tilde{R}_{i})$. Then $H^{0}(\nu_{*}\nu^{*}\mathcal{L}|_{W_{i}}) = H^{0}(\nu^{*}\mathcal{L}|_{\text{Spec}(\tilde{R}_{i})})=\tilde{R}_{i}$ and likewise $H^{0}(\nu_{*}\nu^{*}\mathcal{O}_{W}|_{W_{i}})=\tilde{R}_{i}$. Since a coherent sheaf on an affine scheme is determined by its global sections, we have the desired equality.
		\end{proof}
		\begin{proposition}\label{nodal}
			Let 
			\begin{itemize}
				\item $E$ be an elliptic curve $p_{1},p_{2}\in E$ with $p_{1}\neq p_{2}$ but $p_{1}\in\langle p_{2}\rangle$ and $p_{2}\in\langle p_{1}\rangle$ under the group law of $E$, 
				\item $\Gamma$ be a curve with a single simple node at $\gamma$ and normalization $\nu':E\to\Gamma$ with $\nu'^{-1}(\gamma)=\{p_{1},p_{2}\}$, and
				\item $X:=E\times\Gamma$ with smooth locus $U$ and normalization $\nu:\tilde{X}=E\times E\to E\times\Gamma$.
			\end{itemize}
			Let $\Delta\subset E\times E$ be the diagonal, $D:=\nu(\Delta)\subset X$, and $\mathcal{M}:=\mathcal{I}_{D}\subset\mathcal{O}_{X}$ be its ideal sheaf in $X$. Then    
			\begin{enumerate}
				\item $[\mathcal{M}]\in K_{0}'(X)\setminus\textnormal{im}(K_{0}(X))$, and \item $[\mathcal{M}|_{V_{i,j}}]\in\textnormal{im}(K_{0}(V_{i,j}))$ for $\bigcup_{i,j}V_{i,j}=X$ an open cover (defined below).
			\end{enumerate}		
		\end{proposition}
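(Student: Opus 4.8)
The plan is to handle both assertions exactly as in Proposition~\ref{glued}, transporting every computation to the smooth surface $\tilde X=E\times E$ along the normalization $\nu$. For (1) I would apply Lemma~\ref{image}, so the task is to compute $\det[\mathcal M|_U]$ together with the image of $\textnormal{Pic}(X)\to\textnormal{Pic}(U)$. Since $\nu$ is an isomorphism over $U$, identify $U\cong\nu^{-1}(U)=E\times(E\setminus\{p_1,p_2\})$; excision on the smooth $\tilde X$ then gives $\textnormal{Pic}(U)=\textnormal{Pic}(E\times E)/\langle \textnormal{pr}_2^{*}\mathcal O_E(p_1),\textnormal{pr}_2^{*}\mathcal O_E(p_2)\rangle$. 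The restriction $\textnormal{Pic}(X)\to\textnormal{Pic}(U)$ factors through $\nu^{*}$, whose image is the subgroup $S=\{L:L|_{E\times\{p_1\}}\cong L|_{E\times\{p_2\}}\}$: a pullback from $X$ must restrict identically to the two preimages of the singular locus, and conversely any such $L$ descends (this is the analogue of Lemma~\ref{Pic}, via the pushout/conductor square for $\nu$). Now $\det[\mathcal M|_U]=\mathcal O_U(-(D\cap U))$, and under the identification above $D\cap U$ is the diagonal, so this class is $-[\mathcal O_{E\times E}(\Delta)]$ modulo the fibre classes; its restriction to $E\times\{p_i\}$ is $\mathcal O_E(p_i)$, while the fibre classes we quotient by restrict trivially there. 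Because $\mathcal O_E(p_1)\not\cong\mathcal O_E(p_2)$ for $p_1\ne p_2$, the class $[\mathcal O(\Delta)]$ is not in the image of $S$, and Lemma~\ref{image} yields (1).

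For (2), away from the node $X=U$ is smooth and $D\cap U$ is an effective Cartier divisor, so $\mathcal M|_U$ is a line bundle and hence perfect; the whole difficulty is concentrated at the two points $(p_1,\gamma)$ and $(p_2,\gamma)$ where $D$ crosses the singular locus $E\times\{\gamma\}$. Writing $[\mathcal M]=[\mathcal O_X]-[\mathcal O_D]$ with $\mathcal O_D=\nu_{*}\mathcal O_\Delta$ and $[\mathcal O_\Delta]=[\mathcal O_{E\times E}]-[\mathcal O_{E\times E}(-\Delta)]$, I would like to realize $\mathcal O_{E\times E}(-\Delta)$ as $\nu^{*}$ of a line bundle on $X$ and invoke Lemma~\ref{locally-trivial} to force $[\mathcal O_D]$ to be locally trivial. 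The obstruction is exactly the computation above: $\mathcal O(\Delta)$ does not descend. I would repair this with an auxiliary graph. Taking the involution $\sigma(e)=p_1+p_2-e$ exchanging $p_1$ and $p_2$, its graph $\Gamma_\sigma$ passes through $(p_1,p_2)$ and $(p_2,p_1)$, so $\mathcal O(\Delta+\Gamma_\sigma)$ restricts to $\mathcal O_E(p_1+p_2)$ on both fibres and therefore descends to a line bundle $\mathcal N\in\textnormal{Pic}(X)$ with $\nu^{*}\mathcal N=\mathcal O(\Delta+\Gamma_\sigma)$, while $\bar D=\nu(\Delta\cup\Gamma_\sigma)$ is Cartier on $X$.

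With $\mathcal N$ in hand I would apply Lemma~\ref{locally-trivial} to obtain a cover $\{W_j\}$ on which $[\nu_{*}\mathcal O_{\Delta+\Gamma_\sigma}]=[\nu_{*}\nu^{*}\mathcal O_X]-[\nu_{*}\nu^{*}\mathcal N^{-1}]$ vanishes, and expand $[\nu_{*}\mathcal O_{\Delta+\Gamma_\sigma}]=[\mathcal O_D]+[\mathcal O_{D'}]-[\mathcal O_{\Delta\cap\Gamma_\sigma}]$, where $D'=\nu(\Gamma_\sigma)$ and $\Delta\cap\Gamma_\sigma$ is a finite set lying in the smooth locus (hence its class is perfect). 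This controls $[\mathcal O_D]+[\mathcal O_{D'}]$; to isolate $[\mathcal O_D]$ I would refine the cover near each node point, where $X$ is locally a product $E\times(\text{node})$, i.e. a gluing $Y\sqcup_{E\times\{\gamma\}}Z$ of two smooth branches, with $D$ contained in a single branch on which it is principal after shrinking the $E$-factor so that $\mathcal O_E(p_i)$ trivializes. On such a chart Lemma~\ref{perfect} applies directly and shows $[\mathcal M]$ is perfect; the two indices of the cover $V_{i,j}$ record the shrinking of the $E$-factor and the separation of the two node points from the smooth locus.

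The main obstacle is precisely the behaviour at $(p_1,\gamma)$ and $(p_2,\gamma)$: there $\mathcal I_D$ is \emph{not} a perfect complex, since its stalk $\mathcal O_D$ is the structure sheaf of a single branch through the node and has infinite projective dimension, so local perfectness must be proved for $K$-theory classes rather than for the sheaf itself. This is also where the hypotheses $p_1\in\langle p_2\rangle$, $p_2\in\langle p_1\rangle$ are meant to intervene: they make $p_1-p_2$ torsion and, more importantly, secure that the auxiliary correction is globally available with the correct incidence along the node preimages $E\times\{p_1,p_2\}$, so that $\Delta+\Gamma_\sigma$ descends to the Cartier divisor $\bar D$ and Lemma~\ref{locally-trivial} can be applied. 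I expect the two remaining verifications—that $\bar D$ is genuinely Cartier at the node (matching trivializations on the two branches, which works because the relevant ratio of local parameters is a unit) and that the excess point classes are perfect—to be routine but to require the most care.
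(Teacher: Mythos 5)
Your argument for claim (1) is essentially the paper's: you factor the restriction $\textnormal{Pic}(X)\to\textnormal{Pic}(U)$ through $\nu^{*}$, identify $\textnormal{im}(\nu^{*})$ as the line bundles on $E\times E$ with matching restrictions to $E\times\{p_1\}$ and $E\times\{p_2\}$, and observe that $\mathcal{O}_{\tilde X}(-\Delta)$ restricts to $\mathcal{O}_E(-p_1)\not\cong\mathcal{O}_E(-p_2)$. That part is fine.

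For claim (2) you take a genuinely different route, and it has a gap at the decisive step. Adding the graph $\Gamma_\sigma$ of the involution $\sigma(e)=p_1+p_2-e$ does make $\mathcal{O}(\Delta+\Gamma_\sigma)$ descend (note that this works for \emph{any} $p_1\neq p_2$ and never actually uses the hypothesis $p_1\in\langle p_2\rangle$, $p_2\in\langle p_1\rangle$ --- a warning sign that the arithmetic input has not been located), and Lemma \ref{locally-trivial} then controls the \emph{sum} $[\mathcal{O}_D]+[\mathcal{O}_{D'}]$ locally, up to the perfect class of the finitely many points of $\Delta\cap\Gamma_\sigma$. But $D'=\nu(\Gamma_\sigma)$ is a curve of exactly the same nature as $D$ (it too meets the singular locus $E\times\{\gamma\}$), so nothing is gained until you isolate $[\mathcal{O}_D]$, and your proposed isolation fails: $\Gamma$ is irreducible (it is the image of $E$), so no Zariski-open neighborhood of $(p_i,\gamma)$ in $X$ decomposes as a gluing $Y\sqcup_{E\times\{\gamma\}}Z$ of two smooth branches --- the branches at a node of an irreducible curve separate only formally or \'etale-locally, and Lemma \ref{perfect} (and the whole problem, which concerns Zariski covers) does not apply there. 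Note also that if this local splitting argument did work, it would prove claim (2) outright and render the whole $\Gamma_\sigma$ construction superfluous. The paper avoids the isolation problem entirely by shrinking in the other direction: it covers $X$ by $V_i=(E\setminus\{p_i\})\times\Gamma$, uses the group-law hypothesis on $p_1,p_2$ to make $\mathcal{O}_{\tilde V_i}(-\Delta_i)$ itself descend to some $\mathcal{L}\in\textnormal{Pic}(V_i)$, writes $[\mathcal{O}_{D_i}]={\nu_i}_*[\nu_i^{*}\mathcal{O}_{V_i}]-{\nu_i}_*[\nu_i^{*}\mathcal{L}]$, and kills this class with Lemma \ref{locally-trivial} --- no auxiliary divisor and no isolation step. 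To repair your argument you would need either to prove that $[\mathcal{O}_{D'}]$ is itself locally perfect (which is the same problem over again) or to drop the correction divisor and instead make $\mathcal{O}(-\Delta)$ descend after shrinking, as the paper does.
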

		\begin{proof}
			For the first claim, by Lemma \ref{image} it suffices to show $\det[\mathcal{M}|_{U}]\in\text{Pic}(U)\setminus\textnormal{im}(\text{Pic}(X))$. The restriction factors as 
			$$\text{Pic}(X)\xrightarrow{\nu^{*}}\text{Pic}(\tilde{X})\to\text{Pic}(U) $$
			We have 
			$$\text{im}(\nu^{*}) =
			\{\mathcal{L}\in\text{Pic}(\tilde{X}):\iota_{E\times\{p_{1}\}}^{*}\mathcal{L}\cong \iota_{E\times\{p_{2}\}}^{*}\mathcal{L}\}$$
			and $\text{Pic}(\tilde{X})\to\text{Pic}(U)$ is the quotient by $\langle \mathcal{O}_{\tilde{X}}(E\times\{p_{1}\}),\mathcal{O}_{\tilde{X}}(E\times\{p_{2}\})\rangle$. Note that $\iota_{E\times\{p_{i}\}}^{*}(\mathcal{O}_{\tilde{X}}(E\times\{p_{j}\}))=0$ for $i,j\in\{1,2\}$, so if $\overline{\mathcal{L}}\in\text{Pic}(U)$ is in $\text{im}(\text{Pic}(X))$, then any lift $\mathcal{L}\in\text{Pic}(\tilde{X})$ of $\overline{\mathcal{L}}$ is in $\text{im}(\nu^{*})$. Thus, it suffices to show $\mathcal{O}_{\tilde{X}}(-\Delta)\not\in\text{im}(\nu^{*})$, and which is true since 
			$$\iota_{E\times\{p_{1}\}}^{*}\mathcal{O}_{\tilde{X}}(-\Delta)\cong\mathcal{O}_{E}(-p_{1})\not\cong\mathcal{O}_{E}(-p_{2})\cong\iota_{E\times\{p_{2}\}}^{*}\mathcal{O}_{\tilde{X}}(-\Delta) $$
			
			For the second claim, for $i=1,2$ let $V_{i}:=(E\setminus\{p_{i}\})\times \Gamma\subset X$, let $\nu_{i}:\tilde{V}_{i}\to V_{i}$ be the normalization, and let $\Delta_{i}:=\Delta\cap\tilde{V}_{i}$ and $D_{i}:=D\cap V_{i}$. 
			
			Since
			$$\mathcal{O}_{\tilde{V}_{i}}(-\Delta_{i})|_{\tilde{V}_{i}\cap(E\times\{p_{1}\})}\cong\mathcal{O}_{E\setminus\{p_{i}\}}\cong\mathcal{O}_{\tilde{V}_{i}}(-\Delta_{i})|_{\tilde{V}_{i}\cap(E\times\{p_{2}\})}$$
			we have $\mathcal{O}_{\tilde{V}_{i}}(-\Delta_{i})=\nu_{i}^{*}\mathcal{L}$ for some $\mathcal{L}\in\text{Pic}(V_{i})$. Thus, from the exact sequence for the divisor $\Delta_{i}$ we have
			$$0\to{\nu_{i}}_{*}\nu_{i}^{*}\mathcal{L}\to{\nu_{i}}_{*}\mathcal{O}_{\tilde{V}_{i}}\to{\nu_{i}}_{*}\mathcal{O}_{\Delta_{i}}\to0  $$
			noting that ${\nu_{i}}_{*}$ is exact by finiteness of $\nu_{i}$. Since $\nu_{i}$ induces an isomorphism $\Delta_{i}\to D_{i}$, we have ${\nu_{i}}_{*}\mathcal{O}_{\Delta_{i}}\cong\mathcal{O}_{D_{i}}$. And since $\mathcal{O}_{\tilde{V}_{i}}\cong\nu_{i}^{*}\mathcal{O}_{V_{i}}$, we have $[\mathcal{O}_{D_{i}}]={\nu_{i}}_{*}[\nu_{i}^{*}\mathcal{O}_{V_{i}}]-{\nu_{i}}_{*}[\nu_{i}^{*}\mathcal{L}]\in K_{0}'(V_{i})$. Thus, by Lemma \ref{locally-trivial} there is an open cover $V_{i}=\bigcup_{i,j}V_{i,j}$ such that $[\mathcal{O}_{D_{i}}|_{V_{i,j}}]=0\in K_{0}'(V_{i,j})$.
 			
			Finally, we have 
			\begin{align*}
				[\mathcal{M}|_{V_{i,j}}] &=
				[\mathcal{O}_{V_{i}}(-D_{i})|_{V_{i,j}}] \\&=
				[\mathcal{O}_{V_{i}}|_{V_{i,j}}]-[\mathcal{O}_{D_{i}}|_{V_{i,j}}] \\&=
				[\mathcal{O}_{V_{i,j}}]
			\end{align*}
			So $[\mathcal{M}|_{V_{i,j}}]\in\text{im}(K_{0}(V_{i,j}))$.
		\end{proof}
		\begin{remark}
			Many concrete examples of such $X$ exist: letting $E$ be any elliptic curve, choose any $p\in E$ not of order $2$, and let $p_{1}:=p$ and $p_{2}:=-p$; then it is possible to construct a birational morphism $f:E\to\mathbf{P}^{N}$ such that $f(p_{1})=f(p_{2})$, and we can take $\Gamma$ to be the image of $f$ (after normalizing additional singularities). 
		\end{remark}
	\section{Additional Counterexample}
		In this section we provide another type of counterexample using (non-reduced) thickenings of Dedekind domains.
		
		Let $A$ be a regular ring, and let $B:=A[z]/(z^2)$ be the dual numbers over $A$, which we will write as $B=A[\varepsilon]$ where $\varepsilon$ is the image of $z$.
		\begin{lemma}\label{restriction}
			Restriction along $A\to B$ and $B\to A$ induce isomorphisms $K_{0}'(B)\xrightarrow{\sim}K_{0}'(A)$ and $K_{0}'(A)\xrightarrow{\sim}K_{0}'(B)$.
		\end{lemma}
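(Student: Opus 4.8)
The plan is to realize both maps as pushforwards along finite morphisms and then check that the two composites are the respective identities. Write $i\colon A\to B$ for the inclusion and $\pi\colon B\to A$ for the quotient $\varepsilon\mapsto 0$, so that $\pi\circ i=\mathrm{id}_{A}$. The morphism $\textnormal{Spec}(B)\to\textnormal{Spec}(A)$ attached to $i$ is finite (indeed $B$ is free of rank $2$ over $A$), so restriction of scalars along $i$ gives a homomorphism $\rho\colon K_{0}'(B)\to K_{0}'(A)$, $\rho[N]=[N|_{A}]$; this is the first map. Dually, the morphism $\textnormal{Spec}(A)\hookrightarrow\textnormal{Spec}(B)$ attached to $\pi$ is a closed immersion, hence finite, so restriction of scalars along $\pi$ gives $\sigma\colon K_{0}'(A)\to K_{0}'(B)$, $\sigma[M]=[M_{0}]$, where $M_{0}$ denotes $M$ regarded as a $B$-module with $\varepsilon$ acting as $0$; this is the second map. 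Both are well defined on $K_{0}'$ because $f_{*}$ is exact for finite $f$, and both preserve finite generation since $B$ is finite over $A$.

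The composite $\rho\circ\sigma\colon K_{0}'(A)\to K_{0}'(A)$ is immediate: for an $A$-module $M$, restricting $M_{0}$ back along $i$ recovers $M$ as an $A$-module, so $\rho\circ\sigma=\mathrm{id}_{K_{0}'(A)}$.

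The substance lies in showing $\sigma\circ\rho=\mathrm{id}_{K_{0}'(B)}$, which I would prove via the $\varepsilon$-adic filtration. Let $N$ be a finitely generated $B$-module. Because $\varepsilon^{2}=0$, both $\varepsilon N$ and $N/\varepsilon N$ are annihilated by $\varepsilon$, i.e.\ they are $A$-modules regarded as $B$-modules via $\pi$. The short exact sequence of $B$-modules
$$0\to \varepsilon N\to N\to N/\varepsilon N\to 0$$
gives $[N]=[\varepsilon N]+[N/\varepsilon N]$ in $K_{0}'(B)$. On the other hand, applying the exact functor $\rho$ yields the same sequence as a short exact sequence of $A$-modules, so $\rho[N]=[\varepsilon N]+[N/\varepsilon N]$ in $K_{0}'(A)$. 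Now apply $\sigma$: since $\varepsilon$ already acts as zero on $\varepsilon N$ and on $N/\varepsilon N$, the functor $\sigma$ sends each of these $A$-module classes to the identical $B$-module class, whence $\sigma\rho[N]=[\varepsilon N]+[N/\varepsilon N]=[N]$. Therefore $\sigma\circ\rho=\mathrm{id}_{K_{0}'(B)}$.

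Having both composites equal to the identity shows $\rho$ and $\sigma$ are mutually inverse isomorphisms, proving the lemma. I expect the filtration step to be the only genuine point: it is exactly the $K_{0}$-level incarnation of the nilpotent invariance of $G$-theory (dévissage along the nilpotent ideal $(\varepsilon)$). I would also note that regularity of $A$ is not actually used in this lemma—only that $A$, and hence $B$, is Noetherian so that the groups and functors are defined.
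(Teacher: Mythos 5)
Your proof is correct and follows essentially the same route as the paper: both hinge on the dévissage $[N]=[\varepsilon N]+[N/\varepsilon N]$ with each piece killed by $\varepsilon$. The only cosmetic difference is that you verify both composites are identities directly, whereas the paper checks one composite and uses the filtration to get surjectivity of the other map; your closing remark that regularity of $A$ is not needed here is also accurate.
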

		\begin{proof}
			The composition $A\to B\to A$ is the identity, so by functoriality, the induced composition $K_{0}'(A)\leftarrow K_{0}'(B)\leftarrow K_{0}'(A)$ is too. But $K_{0}'(A)\to K_{0}'(B)$ is surjective since for any $[M]\in K_{0}'(B)$ we have $[M]=[\varepsilon M]+[M/(\varepsilon)]$ and both the latter are in the image. Since the map must be injective for the composition to be the identity, it is an isomorphism, and so the other map is too.
		\end{proof}
		\begin{lemma}
			The map $A\to B$ is flat.
		\end{lemma}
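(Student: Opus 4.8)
The plan is to observe that $B$ is free as an $A$-module and then invoke the fact that free modules are flat. The presentation $B = A[z]/(z^2)$ realizes $B$ as a quotient of the polynomial ring $A[z]$, which is free over $A$ on the monomials $\{1, z, z^2, \ldots\}$, by the ideal $(z^2)$.

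First I would exhibit an explicit $A$-basis. Since $\varepsilon^2 = 0$ and every power $z^n$ with $n \geq 2$ lies in $(z^2)$, each element of $B$ is uniquely expressible as $a + b\varepsilon$ with $a, b \in A$. Hence $B \cong A \oplus A\varepsilon$ as $A$-modules, a free module of rank $2$.

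Then I would apply the standard fact that a free module is flat: $A$ is flat over itself, and flatness is preserved under arbitrary direct sums, so the rank-two free module $B$ is flat. This is precisely the assertion that the structure map $A \to B$ is flat.

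I expect no real obstacle here, and in particular the regularity hypothesis on $A$ plays no role in this lemma. The only step deserving a moment's care is confirming that the $A$-module structure on $B$ is genuinely free --- but this is immediate from the monomial basis, so the flatness does not require the more delicate criteria (such as $\textnormal{Tor}$-vanishing or the local criterion for flatness) that one might otherwise reach for.
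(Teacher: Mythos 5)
Your proof is correct and takes exactly the same route as the paper, which simply observes that $B$ is a free $A$-module (of rank $2$, with basis $\{1,\varepsilon\}$) and hence flat. Your additional detail about the explicit basis and the irrelevance of the regularity hypothesis is accurate but not needed.
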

		\begin{proof}
			$B$ is a free $A$-module.
		\end{proof}
		Thus, base change along $A\to B$ gives a well-defined map $K_{0}'(A)\to K_{0}'(B)$.
	
		\begin{lemma}\label{double}
			The map $K_{0}'(A)\to K_{0}'(B)$ given by $[M]\mapsto [M\otimes_{A}B]$ maps $\alpha\mapsto 2\alpha$ under the isomorphism $K_{0}'(B)\cong K_{0}'(A)$ of Proposition \ref{restriction}.
		\end{lemma}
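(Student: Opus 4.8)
The plan is to unwind the definition of the isomorphism $K_{0}'(B)\cong K_{0}'(A)$ supplied by Lemma \ref{restriction} and thereby reduce the statement to an elementary computation of an $A$-module structure. Recall that this isomorphism $\phi:K_{0}'(B)\xrightarrow{\sim}K_{0}'(A)$ is the one induced by restriction of scalars along $A\to B$; concretely, for a finitely generated $B$-module $N$ we have $\phi([N])=[N_{A}]$, where $N_{A}$ denotes $N$ regarded as an $A$-module via $A\to B$. Thus I want to show that for every finitely generated $A$-module $M$,
$$\phi([M\otimes_{A}B])=2[M]\in K_{0}'(A).$$
Since the classes $[M]$ of finitely generated $A$-modules generate $K_{0}'(A)$ and every map in sight is a homomorphism of abelian groups, it suffices to verify the identity on such generators.

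First I would identify the $A$-module $(M\otimes_{A}B)_{A}$ obtained by restricting scalars. As an $A$-module, $B=A\cdot 1\oplus A\cdot\varepsilon$ is free of rank two, so there is an $A$-linear isomorphism $M\otimes_{A}B\cong M\oplus(M\otimes_{A}A\varepsilon)\cong M\oplus M$; here one checks that the $A$-action on $M\otimes_{A}B$ coming from $A\to B$ acting on the $B$-factor agrees, via the tensor relation, with the action on the $M$-factor, so this identification is genuinely $A$-linear for the restricted structure. Hence $(M\otimes_{A}B)_{A}\cong M\oplus M$ as $A$-modules.

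From the resulting (split) short exact sequence we then read off
$$\phi([M\otimes_{A}B])=[(M\otimes_{A}B)_{A}]=[M]+[M]=2[M]\in K_{0}'(A),$$
which is exactly the claim. Phrased functorially, the composite $K_{0}'(A)\xrightarrow{-\otimes_{A}B}K_{0}'(B)\xrightarrow{\phi}K_{0}'(A)$ is the functor $M\mapsto M\otimes_{A}B_{A}$ with $B_{A}\cong A^{\oplus 2}$, i.e.\ multiplication by $2$.

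The only point demanding care is the bookkeeping of module structures: I must make sure that the $A$-module structure on $M\otimes_{A}B$ induced by restriction along $A\to B$ is precisely the one for which $B_{A}$ is free of rank two, and not some other structure. Once that identification is pinned down there is no further obstacle, since both $-\otimes_{A}B$ (flatness) and restriction of scalars (finiteness of $A\to B$) are exact, and the computation of $B_{A}$ as a free $A$-module of rank two is immediate.
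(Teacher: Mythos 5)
Your proof is correct and follows essentially the same route as the paper: the paper's one-line argument is precisely that $M\otimes_{A}B\cong M\oplus M$ as $A$-modules because $B$ is free of rank two over $A$, so the composite with restriction of scalars is multiplication by $2$. Your additional care in pinning down which $A$-module structure is being used (and your correct identification of the isomorphism of Lemma \ref{restriction} as restriction of scalars along $A\to B$) is exactly the bookkeeping the paper leaves implicit.
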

		\begin{proof}
			For any $B$-module $M$ we have $M\otimes_{A} B\cong M\oplus M$ as $A$-modules, so $[M]\mapsto 2[M]$.  
		\end{proof}
		Recall that the base change map $K_{0}(A)\to K_{0}(B)$ is well-defined regardless of flatness.
		\begin{lemma}\label{surjective}
			The map $K_{0}(A)\to K_{0}(B)$ given by $[M]\mapsto [M\otimes_{A}B]$ is an isomorphism.
		\end{lemma}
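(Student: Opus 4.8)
The plan is to identify both groups with Grothendieck groups of finitely generated projective modules and then to show that base change is an isomorphism whose inverse is reduction modulo $\varepsilon$. Since $\mathrm{Spec}(A)$ and $\mathrm{Spec}(B)$ are affine Noetherian schemes, they have the resolution property, so by the identification recalled in the Background section $K_{0}(A)$ and $K_{0}(B)$ are the Grothendieck groups of finitely generated projective $A$- and $B$-modules, respectively. Write $b\colon K_{0}(A)\to K_{0}(B)$, $[P]\mapsto[P\otimes_{A}B]$ for the map in the statement, and $r\colon K_{0}(B)\to K_{0}(A)$, $[Q]\mapsto[Q\otimes_{B}A]=[Q/\varepsilon Q]$ for reduction along $B\to A$; both are well defined because base change of a projective module is projective.

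First I would verify that $r$ is a left inverse of $b$. For a projective $A$-module $P$ one has $(P\otimes_{A}B)\otimes_{B}A\cong P\otimes_{A}A\cong P$, whence $r\circ b=\mathrm{id}_{K_{0}(A)}$; in particular $b$ is injective. Everything then reduces to showing that $b$ is surjective, i.e.\ that every finitely generated projective $B$-module is, up to isomorphism, extended from $A$.

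The crux is exactly this surjectivity, which is the nilpotent invariance of $K_{0}$ for the square-zero ideal $(\varepsilon)\subset B$ with $B/(\varepsilon)=A$, and it is the one place where I expect to do real work. Given a finitely generated projective $B$-module $Q$, present it as the image of an idempotent $e\in M_{n}(B)$; then $Q\otimes_{B}A$ is the image of the reduced idempotent $\bar e\in M_{n}(A)$. Applying the ring map $A\to B$ to the entries of $\bar e$ yields a second idempotent $e'\in M_{n}(B)$ whose image is $(Q\otimes_{B}A)\otimes_{A}B$ and which again reduces to $\bar e$. The key lemma on lifting idempotents along a nilpotent ideal now applies: two idempotents of $M_{n}(B)$ that are congruent modulo the nilpotent ideal $M_{n}((\varepsilon))$ are conjugate by an element of $1+M_{n}((\varepsilon))\subset\mathrm{GL}_{n}(B)$. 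Hence $e$ and $e'$ have isomorphic images, so $Q\cong(Q\otimes_{B}A)\otimes_{A}B$ and $[Q]=b([Q\otimes_{B}A])$, proving that $b$ is surjective.

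Combining the two steps, $b$ is both injective and surjective, hence an isomorphism (with inverse $r$). The only genuinely nontrivial ingredient is the conjugacy of idempotents lifting a fixed one modulo a nilpotent ideal; since $(\varepsilon)^{2}=0$ this is especially transparent—any lift differs from $e'$ by an element of $M_{n}((\varepsilon))$, and one can conjugate it into $e'$ by an explicit unit of the form $1+t$ with $t\in M_{n}((\varepsilon))$—and everything else in the argument is formal. Note that regularity of $A$ plays no role here; the statement holds for any ring $A$ with $B=A[\varepsilon]$.
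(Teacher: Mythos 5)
Your proof is correct, and while its overall skeleton matches the paper's (use the retraction $B\to A$ to produce a one-sided inverse, then prove the remaining half of bijectivity), the heart of the argument is genuinely different. The paper proves instead that the reduction map $K_{0}(B)\to K_{0}(A)$ is \emph{injective}, by showing that $N\otimes_{B}A\cong N'\otimes_{B}A$ forces $N\cong N'$: it lifts the composite $N\to N'/(\varepsilon)$ to a map $\varphi:N\to N'$ using projectivity of $N$, and then applies the five lemma to the two short exact sequences $0\to\varepsilon N\to N\to N/(\varepsilon)\to 0$, checking locally on free modules that $\varphi$ is an isomorphism on $\varepsilon N$ via $\varepsilon B\cong B/(\varepsilon)$. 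You prove the complementary statement that the base-change map is \emph{surjective} --- every finitely generated projective $B$-module is extended from $A$ --- by presenting $Q$ as the image of an idempotent and invoking conjugacy of idempotents congruent modulo a nilpotent ideal (which is indeed elementary here: with $u=e'e+(1-e')(1-e)=1+t$, $t\in M_{n}((\varepsilon))$, one has $ueu^{-1}=e'$). Given that $r\circ b=\mathrm{id}$, either half suffices, and in fact the two module-theoretic facts are equivalent. Your route is more classical and matrix-theoretic, makes the ``extended from $A$'' structure explicit, and sidesteps a small point the paper leaves implicit (passing from injectivity on isomorphism classes to injectivity on $K_{0}$, which requires a word about stable isomorphism); the paper's route is more homological and generalizes more readily to situations where one prefers to manipulate maps of modules rather than idempotents. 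Your closing observation that regularity of $A$ is irrelevant to this lemma is also correct --- it is pure nilpotent invariance of $K_{0}$ --- and the paper likewise does not use regularity in this proof.
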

		\begin{proof}
			Consider the map $K_{0}(B)\to K_{0}(A)$ given by $[N]\mapsto[N\otimes_{B}A]$. The composition $K_{0}(A)\to K_{0}(B)\to K_{0}(A)$ is the identity, so it suffices to show that the latter map is injective, i.e., for projective $B$-modules $N,N'$, that $N\otimes_{B}A\cong N'\otimes_{B}A$ as $A$-modules implies $N\cong N'$ as $B$-modules.
			
			If $N\otimes_{B}A\cong N'\otimes_{B}A$ as $A$-modules, then $N/(\varepsilon)\cong N'/(\varepsilon)$ as $B$-modules. Since $N$ is projective, we can lift the composition $N\to N'/(\varepsilon)$ to a map $\varphi:N\to N'$:
			$$\begin{tikzcd}
				0 \arrow[r] & \varepsilon N \arrow[r] \arrow[d, dashed] & N \arrow[r] \arrow[d, "\exists\varphi", dashed] \arrow[rd] & N/(\varepsilon) \arrow[r] \arrow[d, "\cong"] & 0 \\
				0 \arrow[r] & \varepsilon N' \arrow[r]                   & N' \arrow[r]                                               & N'/(\varepsilon) \arrow[r]                   & 0
			\end{tikzcd}$$
			By the five lemma, it suffices to check that $\varphi$ induces an isomorphism $\varepsilon N\to\varepsilon N'$. We can check this locally, and since $N,N'$ are locally free we can assume they are free. Since $\varepsilon B\cong B/(\varepsilon)$, so $\varepsilon B^{n}\cong B^{n}/(\varepsilon)$, and $\varphi:\varepsilon N\to\varepsilon N'$ corresponds to $N/(\varepsilon)\xrightarrow{\cong}N'/(\varepsilon)$.					
		\end{proof}
		\begin{proposition}\label{two}
			The image of the inclusion $K_{0}(B)\to K_{0}'(B)$ is $2K_{0}'(B)$.
		\end{proposition}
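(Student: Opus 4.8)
The plan is to compute the image of the inclusion $K_{0}(B)\to K_{0}'(B)$ by transporting the question, via base change along the flat map $A\to B$, to the regular ring $A$, where the analogous inclusion is already understood. First I would record the square
$$
\begin{tikzcd}
K_{0}(A) \arrow[r, "\textnormal{inc}"] \arrow[d, "-\otimes_{A}B"'] & K_{0}'(A) \arrow[d, "-\otimes_{A}B"] \\
K_{0}(B) \arrow[r, "\textnormal{inc}"'] & K_{0}'(B)
\end{tikzcd}
$$
whose vertical arrows are the two base-change maps and whose horizontal arrows are the natural inclusions. This square commutes: on a generator $[M]$ with $M$ a finitely generated projective $A$-module, both paths send $[M]$ to the class of $M\otimes_{A}B$ in $K_{0}'(B)$.

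The argument is then a short diagram chase. By Lemma \ref{surjective} the left vertical map $K_{0}(A)\to K_{0}(B)$ is an isomorphism, so in particular surjective; hence the image of the bottom inclusion coincides with the image of the composite $K_{0}(A)\to K_{0}(B)\to K_{0}'(B)$. By commutativity this composite equals $K_{0}(A)\xrightarrow{\textnormal{inc}}K_{0}'(A)\xrightarrow{-\otimes_{A}B}K_{0}'(B)$. Because $A$ is regular, the top inclusion $K_{0}(A)\to K_{0}'(A)$ is surjective (as recorded in Section 2), and therefore the image we want is exactly the image of the right vertical base-change map $K_{0}'(A)\to K_{0}'(B)$.

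To finish I would invoke Lemma \ref{double}: under the restriction isomorphism $K_{0}'(B)\xrightarrow{\sim}K_{0}'(A)$ of Lemma \ref{restriction}, this base-change map becomes multiplication by $2$ on $K_{0}'(A)$. Since the isomorphism identifies $2K_{0}'(B)$ with $2K_{0}'(A)$, the image of the base-change map, and hence the image of $K_{0}(B)\to K_{0}'(B)$, is precisely $2K_{0}'(B)$.

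I do not expect a genuine obstacle here, as all the substantive input is already isolated in the preceding lemmas — most critically the isomorphism of Lemma \ref{surjective}, whose proof is the delicate lifting and five-lemma argument. The only points demanding care are verifying that the square commutes and keeping the two distinct base-change maps straight — one on $K_{0}$, which needs no flatness, and one on $K_{0}'$, which uses flatness of $A\to B$ — together with noting that it is precisely the surjectivity of the top inclusion that pins the image down to all of $2K_{0}'(B)$ rather than merely a subgroup of it.
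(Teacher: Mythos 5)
Your proposal is correct and follows essentially the same route as the paper: the same commutative square of base-change and inclusion maps, the same reduction via Lemma \ref{surjective} and regularity of $A$ to the image of $K_{0}'(A)\to K_{0}'(B)$, and the same appeal to Lemma \ref{double} to identify that image with $2K_{0}'(B)$. The only difference is cosmetic (your diagram is the transpose of the paper's), and your closing remarks about which base-change map requires flatness are accurate.
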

		\begin{proof}
			The base change and inclusion maps yield a commutative diagram.
			$$
			\begin{tikzcd}
				K_{0}'(A) \arrow[r]                   & K_{0}'(B)          \\
				K_{0}(A) \arrow[u, "\cong"] \arrow[r, "\cong"] 	& K_{0}(B) \arrow[u]
			\end{tikzcd} $$
			with $K_{0}(A)\xrightarrow{\cong} K_{0}'(A)$ following from regularity of $A$ and $K_{0}(A)\xrightarrow{\cong} K_{0}(B)$ following from Lemma \ref{surjective}. Thus, the image of $K_{0}(B)\to K_{0}'(B)$ equals the image of the composition $K_{0}(A)\to K_{0}'(B)$, which equals the image of $K_{0}'(A)\to K_{0}'(B)$. And by Lemma \ref{double}, this map is multiplication by $2$.			
		\end{proof}
		Now, assume $A$ is a Dedekind domain. It is well known that $K_{0}(A)\cong \mathbf{Z}\oplus \textnormal{Cl}(A)$, and that an open subscheme of $\text{Spec}(A)$ is the spectrum of a Dedekind domain. 
		\begin{corollary}\label{thick}
			Let $A$ be a Dedekind domain with distinct primes $\mathfrak{p},\mathfrak{q}\subset A$ such that
			\begin{itemize}
				\item $[\mathfrak{p}]=[\mathfrak{q}]\in\textnormal{Cl}(A)$, and 
				\item $[\mathfrak{  p}],[\mathfrak{q}]\notin2\textnormal{Cl}(A)$.
			\end{itemize}
			Let $B=A[\varepsilon]$ with $\mathfrak{p}':=(\varepsilon,\mathfrak{p})\subset B$ and $\mathfrak{q}':=(\varepsilon,\mathfrak{q})\subset B$, and let $U_{1}:=\textnormal{Spec}(B)\setminus\{\mathfrak{p}'\}$ and $U_{2}:=\textnormal{Spec}(B)\setminus\{\mathfrak{q}'\}$. Then $U_{1}\cup U_{2}=\textnormal{Spec}(B)$ and
			\begin{enumerate}
				\item $[\mathfrak{p}']\in K_{0}'(B)\setminus\textnormal{im}(K_{0}(B))$, and 
				\item $[\mathfrak{p}'|_{U_{i}}]\in\textnormal{im}(K_{0}(U_{i}))$ for $i=1,2$.
			\end{enumerate} 
		\end{corollary}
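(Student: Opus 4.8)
The plan is to verify the covering statement and then the two numbered claims in turn, leaning on Proposition~\ref{two} to convert ``perfect / not perfect'' into a membership question about $2K_0'$, and on the isomorphism $K_0'(B)\xrightarrow{\sim}K_0'(A)\cong\mathbf{Z}\oplus\textnormal{Cl}(A)$ of Lemma~\ref{restriction} to reduce everything to class-group arithmetic. Throughout I read $[\mathfrak{p}']$ as the class $[B/\mathfrak{p}']$ of the residue field of the closed point, matching the $[x]\mapsto[\kappa(x)]$ convention. The covering $U_1\cup U_2=\textnormal{Spec}(B)$ is immediate: $\mathfrak{p}'=(\varepsilon,\mathfrak{p})$ and $\mathfrak{q}'=(\varepsilon,\mathfrak{q})$ are distinct maximal ideals (their quotients are the fields $A/\mathfrak{p}$ and $A/\mathfrak{q}$), so $U_1\cup U_2=\textnormal{Spec}(B)\setminus(\{\mathfrak{p}'\}\cap\{\mathfrak{q}'\})=\textnormal{Spec}(B)$.

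For claim (1), Proposition~\ref{two} tells us the image of $K_0(B)$ in $K_0'(B)$ is exactly $2K_0'(B)$, so it suffices to show $[\mathfrak{p}']\notin 2K_0'(B)$. I would transport this across the restriction-of-scalars isomorphism $f_*\colon K_0'(B)\xrightarrow{\sim}K_0'(A)$ of Lemma~\ref{restriction}. Since $B/\mathfrak{p}'=A/\mathfrak{p}$ as an $A$-module, we get $f_*[\mathfrak{p}']=[A/\mathfrak{p}]$, and the short exact sequence $0\to\mathfrak{p}\to A\to A/\mathfrak{p}\to0$ gives $[A/\mathfrak{p}]=[A]-[\mathfrak{p}]$, which under $K_0(A)\cong\mathbf{Z}\oplus\textnormal{Cl}(A)$ equals $(0,-[\mathfrak{p}])$. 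As the isomorphism carries $2K_0'(B)$ to $2\mathbf{Z}\oplus2\textnormal{Cl}(A)$, membership would force $[\mathfrak{p}]\in2\textnormal{Cl}(A)$, contradicting the second hypothesis; hence $[\mathfrak{p}']\notin\textnormal{im}(K_0(B))$.

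For claim (2) I would treat the two opens separately. On $U_1$ the sheaf associated to $B/\mathfrak{p}'$ is supported precisely at the removed point $\mathfrak{p}'$, so its restriction vanishes and $[\mathfrak{p}'|_{U_1}]=0\in\textnormal{im}(K_0(U_1))$. The interesting open is $U_2$. Here I would first identify $U_2$ with $\textnormal{Spec}(A''[\varepsilon])$, where $A''=\Gamma(\textnormal{Spec}(A)\setminus\{\mathfrak{q}\},\mathcal{O})$ is the Dedekind domain cutting out the complement of $\mathfrak{q}$: since $\varepsilon$ is nilpotent, $\textnormal{Spec}(B)\to\textnormal{Spec}(A)$ is a homeomorphism, so $U_2$ is the preimage of $\textnormal{Spec}(A'')$ and equals $\textnormal{Spec}(A''\otimes_A B)=\textnormal{Spec}(A''[\varepsilon])$. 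The localization sequence for class groups gives $\textnormal{Cl}(A'')=\textnormal{Cl}(A)/\langle[\mathfrak{q}]\rangle$, and since $[\mathfrak{p}]=[\mathfrak{q}]$ this kills $[\mathfrak{p}]$, so $\mathfrak{p}A''$ is principal. Restricting and applying $f_*$ again, $[\mathfrak{p}'|_{U_2}]=[A''/\mathfrak{p}A'']=(0,-[\mathfrak{p}A''])=0$ in $K_0'(A'')\cong K_0'(U_2)$, which lies in $\textnormal{im}(K_0(U_2))$.

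The main obstacle, and the step I would be most careful about, is the geometric bookkeeping around $U_2$: checking that $\textnormal{Spec}(B)\setminus\{\mathfrak{q}'\}$ really is the affine thickening $\textnormal{Spec}(A''[\varepsilon])$, and that the restriction map $\textnormal{Cl}(A)\to\textnormal{Cl}(A'')$ sends $[\mathfrak{p}]$ to the class of $\mathfrak{p}A''$ while killing exactly $[\mathfrak{q}]$. Once $A''$ is in hand, Proposition~\ref{two} applies verbatim to $A''[\varepsilon]$, but in fact the residue-field classes restrict to $0$ on both opens, so local perfectness needs nothing beyond the vanishing of $[\mathfrak{p}]$ in $\textnormal{Cl}(A'')$. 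The only genuine input is the asymmetry between $\textnormal{Cl}(A)$, where $[\mathfrak{p}]$ is nonzero and not a double, and $\textnormal{Cl}(A'')$, where it dies --- which is precisely what the two hypotheses on $\mathfrak{p},\mathfrak{q}$ encode.
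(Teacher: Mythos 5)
Your proposal is correct and follows the same overall strategy as the paper: both reduce everything to arithmetic in $\mathbf{Z}\oplus\textnormal{Cl}(A)$ via the restriction isomorphism of Lemma~\ref{restriction} together with the identification $\textnormal{im}(K_{0}(B))=2K_{0}'(B)$ from Proposition~\ref{two}. Two points of divergence are worth recording. First, the paper reads $[\mathfrak{p}']$ as the class of the ideal $\mathfrak{p}'$ itself as a $B$-module: its proof uses the filtration $0\to\varepsilon B\to\mathfrak{p}'\to\mathfrak{p}'/(\varepsilon)\to0$ to compute the class as $[A]+[\mathfrak{p}]=(2,[\mathfrak{p}])$, and on the opens obtains $[\mathcal{O}_{U_{i}}]$ rather than $0$; you instead read $[\mathfrak{p}']$ as the residue field class $[B/\mathfrak{p}']$, landing on $(0,-[\mathfrak{p}])$. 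Since $[\mathfrak{p}']+[B/\mathfrak{p}']=[B]\in\textnormal{im}(K_{0}(B))$, and likewise after restriction to each $U_{i}$, the two readings differ by a perfect class and are interchangeable for every membership claim in the statement, so your computation does prove the corollary as intended --- but you should add that one bridging line, since the paper's convention is visibly the ideal-class one. Second, for local perfectness on $U_{2}$ the paper argues directly that $[\mathfrak{p}]=[\mathfrak{q}]$ forces $\mathfrak{p}\cong\mathfrak{q}$ as $A$-modules, whence $[\mathfrak{p}'|_{U_{2}}]=[\mathfrak{q}'|_{U_{2}}]=[\mathcal{O}_{U_{2}}]$, while you pass through the identification $U_{2}\cong\textnormal{Spec}(A''[\varepsilon])$ and the localization sequence $\textnormal{Cl}(A'')=\textnormal{Cl}(A)/\langle[\mathfrak{q}]\rangle$. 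These are the same mechanism --- $[\mathfrak{p}]$ becomes principal on $U_{2}$ precisely because it equals the killed class $[\mathfrak{q}]$ --- and your version has the merit of spelling out the geometric bookkeeping (affineness of the punctured spectrum) that the paper only asserts as well known.
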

		\begin{proof}
			By Lemma \ref{restriction}, restriction along $A\to B$ induces $K_{0}'(B)\cong K_{0}(A)\cong\mathbf{Z}\oplus\textnormal{Cl}(A)$, and by Proposition \ref{two}, $\textnormal{im}(K_{0}(B))=2\mathbf{Z}\oplus2\textnormal{Cl}(A)$. We have a short exact sequence of $B$-modules:
			$$0\to\varepsilon B\to \mathfrak{p}'\to \mathfrak{p}'/(\varepsilon)\to0  $$
			Then $[\varepsilon B],[\mathfrak{p}'/(\varepsilon)]\in K_{0}'(B)$ restrict to $[A],[\mathfrak{p}]\in K_{0}(A)$, and since $[A]+[\mathfrak{p}]\notin2\mathbf{Z}\oplus 2\textnormal{Cl}(A)$ we have $[\mathfrak{p}']=[\varepsilon B]+[\mathfrak{p}'/(\varepsilon)]\notin\textnormal{im}(K_{0}(B))$.
			
			And $[\mathfrak{p}'|_{U_{1}}]=[\mathcal{O}_{U_{1}}]\in\textnormal{im}(K_{0}(U_{1}))$ and $[\mathfrak{p}'|_{U_{2}}]=[\mathfrak{q}'|_{U_{2}}]=[\mathcal{O}_{U_{2}}]\in\textnormal{im}(K_{0}(U_{2}))$.
		\end{proof}
		Here is a concrete case: 
		\begin{example}
			Let $\tilde{A}=\mathbf{Z}[\sqrt{-21}]$. It is known that $\textnormal{Cl}(\tilde{A})=(\mathbf{Z}/2\mathbf{Z})^{2}$ and that each element in the class group is represented by a prime ideal. Let $\tilde{\mathfrak{p}}$, $\tilde{\mathfrak{q}}$, and $\tilde{\mathfrak{r}}$ be prime ideals representing the three non-trivial classes, and note that none are in $2\textnormal{Cl}(\tilde{A})$. Let $\textnormal{Spec}(A)=\textnormal{Spec}(\tilde{A})\setminus\{\tilde{\mathfrak{r}}\}$, and let $\mathfrak{p},\mathfrak{q}\in\textnormal{Spec}(A)$ be the images of the other primes. Then $[\mathfrak{p}]=[\mathfrak{q}]\in\textnormal{Cl}(A)$ and $[\mathfrak{p}],[\mathfrak{q}]\notin2\textnormal{Cl}(A)$, satisfying the hypotheses of Corollary \ref{thick}.
		\end{example}
	
	\nocite{*}
	\bibliography{bibliography}{}
	\bibliographystyle{plain}
		
\end{document}